\documentclass[a4paper,11pt,leqno]{amsart}
\usepackage[T1]{fontenc}
\usepackage[utf8]{inputenc}
\usepackage[margin=1.2in]{geometry}
\usepackage{lmodern}
\usepackage[english]{babel}
\usepackage{graphicx}
\usepackage{amsmath}
\usepackage{amsfonts}
\usepackage{amssymb} 
\usepackage{amsthm}
\usepackage{bbm}
\usepackage{bm} 
\usepackage{lipsum}
\usepackage{xcolor}
\usepackage{url}
\usepackage{hyperref}
\allowdisplaybreaks
\theoremstyle{plain}
\newtheorem{thm}{Theorem}
\newtheorem{lem}[thm]{Lemma}
\newtheorem{prop}[thm]{Proposition}

\newtheorem{defn}[thm]{Definition}
\newtheorem{rem}[thm]{Remark}

\def \be {\begin{equation}}
\def \ee {\end{equation}}

\def \r {}
\def \S{\Sigma}
\def \a{\alpha}
\def \ra{\rightarrow}
\def \bs {\boldsymbol}
\def \E {\mathbb{E}}

\def \R {\mathbb{R}}

\begin{document}

\title[On the convergence in MFG: a two state model without uniqueness]{On the convergence problem in Mean Field Games: a two state model without uniqueness}
\author{Alekos Cecchin}
\address[A. Cecchin, P. Dai Pra, M. Fischer, and G. Pelino]
{\newline \indent Department of Mathematics ``Tullio Levi Civita'', 
 University of Padua \newline
\indent Via Trieste 63, 35121 Padova, Italy
\newline }
\email[]{alekos.cecchin@math.unipd.it, daipra@math.unipd.it, fischer@math.unipd.it, \newline \indent   
	 guglielmo.pelino@math.unipd.it}
\author{Paolo Dai Pra}
\author{Markus Fischer}
\author{Guglielmo Pelino}
\thanks{The first and last author are partially supported by the PhD Program in Mathematical Science, Department of Mathematics, University of Padua (Italy), Progetto Dottorati - Fondazione Cassa di Risparmio di Padova e Rovigo. 
The second, third and fourth author acknowledge financial support through the project ``Large Scale Random Structures'' of the Italian Ministry of Education, Universities and Research (PRIN 20155PAWZB-004).
The first and third author are also supported by the research projects ``Nonlinear Partial Differential Equations: Asymptotic Problems and Mean Field Games'' of the Fondazione CaRiPaRo, and ``Mean Field Games and Nonlinear PDEs'' of
the University of Padua
}
\date{\today}
\subjclass{60F99, 60J27, 60K35, 91A13, 93E20} %
\keywords{Mean field game, finite state space, jump Markov process, $N$-person games, Nash equilibrium, master equation, propagation of chaos, non-uniqueness}
\begin{abstract}
We consider $N$-player and mean field  games in continuous time over a finite horizon, where the position of each agent belongs to $\{-1,1\}$. 
 If there is uniqueness of mean field game solutions,  e.g. under monotonicity assumptions, then the master equation possesses a smooth solution which can be used to prove convergence of the value functions and of the feedback Nash equilibria of the $N$-player game, as well as a propagation of chaos property for the associated optimal trajectories. We study here an example with anti-monotonous costs, and show that the mean field game has exactly three solutions. 
 We prove that the value functions  converge to the entropy solution of the master equation, which in this case can be written as a scalar conservation law in one space dimension, and that the optimal trajectories admit a limit: they select one mean field game soution, so there is propagation of chaos. Moreover, viewing the mean field game system as the necessary conditions for optimality of a deterministic control problem, we show that the $N$-player game selects the {\r optimizer} of this problem.
\end{abstract}
\keywords{
  Mean field game, finite state space, jump Markov process, $N$-person games, Nash equilibrium, master equation, propagation of chaos, non-uniqueness}
\subjclass{  60F99, 60J27, 60K35, 91A13, 93E20}
 
\maketitle


\section{Introduction}

In this paper, we study a simple yet illustrative example concerning the convergence problem in finite horizon mean field games. Mean field games, as introduced by J.-M.~Lasry and P.-L.~Lions and, independently, by M.~Huang, R.P.~Malham{\'e} and P.E.~Caines (cf.\ \cite{lasrylions07, huangetal06}), are limit models for symmetric non-cooperative many player dynamic games as the number of players tends to infinity; see, for instance, the lecture notes \cite{cardaliaguet13} and the recent two-volume work \cite{carmonadelarue}. The notion of optimality adopted for the many player games is usually that of a Nash equilibrium. The limit relation can then be made rigorous in two opposite directions: either by showing that a solution of the limit model (the mean field game) induces a sequence of approximate Nash equilibria for the $N$-player games with approximation error tending to zero as $N\to \infty$, or by identifying the possible limit points of sequences of $N$-player Nash equilibria, again in the limit as $N\to \infty$, as solutions, in some sense, of the limit model. This latter direction constitutes the convergence problem in mean field games. 

Important for the convergence problem is the choice of admissible strategies and the resulting definition of Nash equilibrium in the many player games. For Nash equilibria defined in stochastic open-loop strategies, the convergence problem is rather well understood, see \cite{fischer17} and, especially, \cite{lacker16}, both in the context of finite horizon games with general Brownian dynamics. In \cite{lacker16}, limit points of sequences of $N$-player Nash equilibria are shown to be concentrated on weak solutions of the corresponding mean field game. This concept of solution is also used in another, more recent work by Lacker; see below.

Here, we are interested in the convergence problem for Nash equilibria in Markov feedback strategies with full state information. A first result in this direction was given by Gomes, Mohr, and Souza \cite{gomes} in the case of finite state dynamics. There, convergence of Markovian Nash equilibria to the mean field game limit is proved, but only if the time horizon is small enough. A breakthrough was achieved by Cardaliaguet, Delarue, Lasry, and Lions in \cite{cardaliaguetetal15}. In the setting of games with non-degenerate Brownian dynamics, possibly including common noise, those authors establish convergence to the mean field game limit, in the sense of convergence of value functions as well as propagation of chaos for the optimal state trajectories, for arbitrary time horizon provided the so-called master equation associated with the mean field game possesses a unique sufficiently regular solution. The master equation arises as the formal limit of the Hamilton-Jacobi-Bellman systems determining the Markov feedback Nash equilibria. It yields, if well-posed, the optimal value in the mean field game as a function of initial time, state and distribution. It thus also provides the optimal control action, again as a function of time, state, and measure variable. This allows, in particular, to compare the prelimit Nash equilibria to the solution of the limit model through coupling arguments.  

If the master equation possesses a unique regular solution, which is guaranteed under the Lasry-Lions monotonicity conditions, then the convergence analysis can be considerably refined. In this case, for games with finite state dynamics, Cecchin and Pelino \cite{cp} and, independently, Bayraktar and Cohen \cite{bayraktarcohen} obtain a central limit theorem and large deviations principle for the empirical measures associated with Markovian Nash equilibria. In \cite{delarueetala, delarueetalb}, Delarue, Lacker, and Ramanan carry out the analysis, enriched by a concentration of measure result, for Brownian dynamics without or with common noise.

Well-posedness of the master equation implies uniqueness of solutions to the mean field game, given any initial time and initial distribution. Here, we study the convergence problem in Markov feedback strategies for a simple example exhibiting non-uniqueness of solutions. The model has  dynamics in continuous time with players' states taking values in $\{-1,1\}$. Running costs only depend on the control actions, while terminal costs are anti-monotonic with respect to the state and measure variable. Such an example was first considered by Gomes, Velho, and Wolfram in \cite{gomestwoa, gomestwob}, where numerical evidence on the convergence behavior was presented; it should also be compared to Lacker's ``illuminating example'' (Subsection~3.3 in \cite{lacker16}) 
and to the example in Subsection~3.3 of \cite{bardifischer} by Bardi and Fischer, both in the diffusion setting.  In the infinite time horizon and finite state case, an example of non-uniqueness is studied in \cite{paolo}, via numerical simulations, where periodic orbits emerge as solutions to the mean field game. 

{\r{For the two-state example studied here}}, the mean field game possesses exactly three solutions, given any initial distribution, as soon as the time horizon is large enough. Consequently, there is no regular solution to the master equation, while multiple weak solutions exist. For the $N$-player game, on the other hand, there is a unique symmetric Nash equilibrium in Markov feedback strategies for each $N$, determined by the Hamilton-Jacobi-Bellman system. We show that the value functions associated with these Nash equilibria converge, as $N\to \infty$, to a particular solution of the master equation. In our case, the master equation can be written as a scalar conservation law in one variable (cf.\ Subsection~\ref{SubMaster}). The (weak) solution that is selected by the $N$-player Nash equilibria can then be characterized as the unique entropy solution of the conservation law. The entropy solution presents a discontinuity in the measure variable (at the distribution that assigns equal mass to both states). Convergence of the value functions is uniform outside any neighborhood of the discontinuity. We also prove propagation of chaos for the $N$-player state processes provided that their averaged initial distributions do not converge to the discontinuity. The proofs of convergence adapt arguments from \cite{cp} based on the fact that the entropy solution is smooth away from its discontinuity, as well as a qualitative property of the $N$-player Nash equilibria, which prevents crossing of the discontinuity. The entropy solution property is actually not used in the proof. In Subsection~\ref{SubPotential}, we give an alternative characterization of the solution selected by the Nash equilibria in terms of a variational problem based on the potential game structure of our example. Potential mean field games have been studied in several works in the continuous state setting, starting from \cite{card} by Cardaliaguet, Graber, Porretta and Tonon.

Let us mention three recent preprints that are related to our paper. In \cite{nutzetalii}, Nutz, San Martin, and Tan address the convergence problem for a class of mean field games of optimal stopping. The limit model there possesses multiple solutions, which are grouped into three classes according to a qualitative criterion characterizing the proportion of players that have stopped at any given time. Solutions in one of the three classes will always arise as limit points of $N$-player Nash equilibria, solutions in the second class may be selected in the limit, while solutions in the third class cannot be reached through $N$-player Nash equilibria. In \cite{lacker}, Lacker attacks the convergence problem in Markov feedback strategies by probabilistic methods. For a class of games with non-degenerate Brownian dynamics that may exhibit non-uniqueness, the author shows that all limit points of the $N$-player feedback Nash equilibria are concentrated, as in the open-loop case, on weak solutions of the mean field game. These solutions are more general than randomizations of ordinary (``strong'') solutions of the mean field game; their flows of measures, in particular, are allowed to be stochastic containing additional randomness. Still, uniqueness in ordinary solutions implies uniqueness in weak solutions, which permits to partially recover the results in \cite{cardaliaguetetal15}. The question of which weak solutions can appear as limits of feedback Nash equilibria in a situation of non-uniqueness seems to be mainly open. In \cite{delaruetchuendom}, Delarue and Foguen Tchuendom study a class of linear-quadratic mean field games with multiple solutions in the diffusion setting. They prove that by adding a common noise to the limit dynamics uniqueness of solutions is re-established. As a converse to this regularization by noise result, they identify the mean field game solutions that are selected when the common noise tends to zero as those induced by the (unique weak) entropy solution of the master equation of the original problem. The interpretation of the master equation as a scalar conservation law works in their case thanks to a one-dimensional parametrization of an a priori infinite dimensional problem. Limit points of $N$-player Nash equlibria are also considered in \cite{delaruetchuendom}, but in stochastic open-loop strategies. Again, the mean field game solutions that are selected are those induced by the entropy solution of the master equation. Interestingly, these solutions are not minimal cost solutions; indeed, the solution which minimizes the cost of the representative player in the mean field game is shown to be different from the ones selected by the limit of the Nash equilibria. In \cite{delaruetchuendom}, the $N$-player limit and the vanishing common noise limit both select two solutions of the original mean field game with equal probability. This is due to the fact that in \cite{delaruetchuendom} the initial distribution for the state trajectories is chosen to sit at the discontinuity of the unique entropy solution of the master equation. In our case, we expect to see the same behavior if we started at the discontinuity, see Section~\ref{Conclusions} below.   

{\r It is worth mentioning that the opposite situation, with respect to the one treated here, is considered in the examples presented in \cite{doncel} and in Section 7.2.5 of \cite{carmonadelarue}, Volume I. In these examples, uniqueness of mean field game solutions holds, but there are multiple feedback Nash equilibria for the $N$-player game.  This is due to the fact that in both cases the authors consider a finite action set (while for us it is continuous), so that in particular the Nash system is not well-posed. They prove that there is a sequence of (feedback) Nash equilibria which converges to the mean field game limit, but also a sequence that does not converge.}

The rest of this paper is organized as follows. In Section~\ref{SectGames}, for a class of mean field and $N$-player games with finite state space, we give the definition of $N$-player Nash equilibrium and solution of the mean field game, and introduce the corresponding differential equations, namely the $N$-player Hamilton-Jacobi-Bellman system, the mean field game system as well as the associated master equation. Section~\ref{SectExample} presents the two-state example, starting from the limit model, analyzed first in terms of the mean field game system (Subsection~\ref{SubMFG}), then in terms of its master equation (Subsection~\ref{SubMaster}). In Subsections \ref{SubValue} and \ref{SubChaos} we show that the $N$-player Nash equilibria converge to the unique entropy solution of the master equation; cf.\ Theorems \ref{value} and \ref{chaos} below for convergence of value functions and propagation of chaos, respectively. The qualitative property of the Nash equilibria used in the proofs of convergence is in Subsection~\ref{SubPrelimit}. Subsection~\ref{SubPotential} gives the variational characterization of the solution that is selected by the Nash equilibria. Concluding remarks are in Section~\ref{Conclusions}.

\section{Mean field games with finite state space} \label{SectGames}

\subsection{The $N$-player game}

We consider the continuous time evolution of the states $X_i(t)$, $i=1,2,\ldots,N$, of $N$ players; the state of each player belongs to a given finite set $\S$. Players are allowed to control, via an arbitrary {\em feedback}, their jump rates. For $i=1,2,\ldots,N$ and $y \in \S$, we denote by $\a_y^i: [0,T] \times  \S^{N} \ra [0,+\infty)$ the rate at which player $i$ jumps to the state $y \in \S$: it is allowed to depend on the time $t \in [0,T]$, and on the state ${\bm x} = (x_i)_{i=1}^N$ of all players. Denoting by $A$ the set of functions $ [0,T] \times  \S^{N} \ra [0,+\infty)$ which are measurable and locally integrable in time, we assume $\a_y^i \in A$. So we write $\a^i \in \mathcal{A} := A^{\S}$, and let ${\bs{\a}}^N \in \mathcal{A}^N$ denote the controls of all players, and will be also called {\em strategy vector}.
In more rigorous terms, for ${\bs \a}^N \in \mathcal{A}^N$, the state evolution $\bm{X}_t := (X_i(t))_{i=1}^N$ is a Markov process, whose law is uniquely determined as solution to the martingale problem for the time-dependent generator
\[
\mathcal{L}_t f({\bm x}) = \sum_{i=1}^N \sum_{y \in \S} \a^i_y (t,{\bm x}) \left[ f([{\bm x}^i,y]) - f({\bm x})\right],
\]
where
\[
[{\bm x}^i,y]_j = \left\{ \begin{array}{ll} x_j & \mbox{for } j \neq i \\ y &  \mbox{for } j = i. \end{array} \right.
\]
Now let
\[
P(\S) := \{ m \in [0,1]^{\S}: \sum_{x \in \S} m_x = 1\}
\]
be the simplex of probability measures on $\S$. To every ${\bm x} \in \S^N$ we associate 
the element of $P(\S)$
\be \label{empmeas}
m^{N,i}_{\bm{x}} :=  \frac{1}{N-1}\sum_{j=1, j \neq i}^N \delta_{x_j}.
\ee
Thus, $m^{N,i}_{\bm{X}}(t):= m^{N,i}_{\bm{X}_t} $ is the empirical measure of all the players except the $i$-th.
Given the functions
\[
L: \S \times [0,+\infty)^{\S} \ra \R, \ \ F: \S \times P(\S) \ra \R, \ \ G: \S \times P(\S) \ra \R,
\]
the feedback controls ${\bs{\a}}^N \in \mathcal{A}^N$ and the corresponding process ${\bm X}(\cdot)$, the {\em cost} associated to the $i$-th player is given by
\[
J_i^N(\boldsymbol{\alpha}^N) := \mathbb{E}\left[\int_{0}^T \left[L(X_i(t), \alpha^i(t,\bm{X}_t)) + F\left(X_i(t), m^{N,i}_{\bm{X}}(t) \right)\right]dt + G\left(X_i(T), m^{N,i}_{\bm{X}}(T) \right) \right].
\]
For a strategy vector $\boldsymbol{\alpha}^N = (\alpha^1, \dots, \alpha^N)\in\mathcal{A}^N$ and   $\beta\in\mathcal{A}$, denote by $[\boldsymbol{\alpha}^{N,-i}; \beta]$ the perturbed strategy vector given by
\begin{equation*}
[\boldsymbol{\alpha}^{N,-i}; \beta]_j := 
\begin{cases}
\alpha_j, \ \ j \neq i\\
\beta, \ \ j = i.
\end{cases}
\end{equation*}
\begin{defn} \label{def:Nash}
A strategy vector $\boldsymbol{\alpha}^N$ is a Nash equilibrium for the $N$-player game if for each $i=1, \dots, N$
\begin{equation*}
J_i^N(\boldsymbol{\alpha}^N) = \inf_{\beta\in\mathcal{A}}J_i^N([\boldsymbol{\alpha}^{N,-i}; \beta]).
\end{equation*}
\end{defn}
The search for a Nash equilibrium is based on the Hamilton-Jacobi equations that we now briefly illustrate. Define the Hamiltonian $H: \S \times \R^{\S} \ra \R$ as the Legendre transform of $L$:
\begin{equation}
\label{h}
H(x, p) := \sup_{a \in [0,+\infty)^{\S}} \left\{- (a \cdot p)_x - L(x,a)\right\},
\end{equation}
with ${\displaystyle{(a \cdot p)_x := \sum_{y \neq x} a_y p_y}}$.
We will assume the supremum in \eqref{h} is attained at an unique maximizer $a^*(x,p)$.

Given a function $ g: \Sigma \to \mathbb{R}$, we denote its first finite difference $\Delta g (x) \in \mathbb{R}^{\Sigma}$  by
\begin{equation*}
\Delta g(x):= \left(g(y) - g(x)\right)_{y \in \Sigma}.
\end{equation*}
When we have a function $g : \Sigma^N \to \mathbb{R}$, we denote with $\Delta^j g(\bm{x}) \in \mathbb{R}^{\Sigma}$ the first finite difference with respect to the $j$-th coordinate.
The Hamilton-Jacobi-Bellman system associated to the above differential game is given by:
\begin{equation} \label{hjb}
\begin{cases}
-\frac{\partial {v}}{\partial t}^{N,i}(t,\bm{x}) - \sum_{j=1, \ j\neq i}^N a^*(x_j, \Delta^j v^{N,j}) \cdot \Delta^j v^{N,i} + H(x_i, \Delta^i v^{N,i}) = F\left(x_i, m^{N,i}_{\bm{x}} \right),\\
v^{N,i}(T, \bm{x}) = G\left(x_i, m^{N,i}_{\bm{x}} \right).
\end{cases}
\end{equation}
This is a system of $N |\Sigma|^N$ coupled ODE's, indexed by $i\in\{1,\dots,N\}$ and $\bm{x}\in\Sigma^N$, whose well-posedness  for all $T > 0$ can be proved through standard ODEs techniques under regularity assumptions which guarantee that $a^*$ and $H$ are uniformly Lipschitz in their second variable. Under these conditions, the $N$-player game has a unique Nash equilibrium given by the feedback strategy ${\bs{\a}}^N \in \mathcal{A}^N$ defined by
\[
\alpha^{i,N}(t,\bm{x}) := a^{*}(x_i, \Delta^i v^{N,i}(t, \bm{x})) \qquad i=1,\ldots,N.
\]

\subsection{The macroscopic limit: the mean field game and the master equation}

The limit as $N \ra +\infty$ of the $N$-player game admits two alternative descriptions, that we illustrate here at heuristic level. Assuming the empirical measure of the process corresponding to the Nash equilibrium obeys a Law of Large Numbers, i.e. it converges to a deterministic flow in $P(\S)$, a {\em representative player} in the limit as $N\ra +\infty$ faces the following problem: 
\begin{itemize}
\item[(i)]
the player controls its jump intensities $\a_y: [0,T] \times \S \ra [0,+\infty)$, $y \in \S$, via feedback controls depending on time and on his/her own state;
\item[(ii)]
For a given deterministic flow of probability measures 
$m : [0,T] \to P(\Sigma)$, the player aims at minimizing the cost
\be \label{mfcost}
J(\alpha,m):= \mathbb{E} \left[ \int_0^T
	\left[L(X(t), \alpha(t,X(t))) + F(X(t),m(t))\right]dt +G(X(T), m(T))\right].
\ee 
\item[(iii)]
Denote by $\a^{*,m}$ the optimal control for the above problem, and let $(X^{*,m}(t))_{t \in [0,T]}$ be the corresponding optimal process. The above-mentioned Law of Large Number predicts that the flow $(m(t))_{t \in [0,T]}$ should be chosen so that the following consistency relation holds:
\[
m(t) = \text{Law}(X^{*,m}(t))
\]
foe every $t \in [0,T]$.
\end{itemize}
This is implemented by coupling the HJB equation for the control problem with cost \eqref{mfcost} with the forward Kolmogorov equation for the evolution of the $ \text{Law}(X^{*,m}(t))$, obtaining the so-called {\em Mean Field Game System}:
\begin{equation}
\tag{MFG}
\begin{cases}
-\frac{d}{dt} u(t,x) + H(x, \Delta^x u(t,x)) =  F(x, m(t)), \\
\frac{d}{dt} m_x(t) = \sum_y m_y(t) a^{*}_x(y, \Delta^y u(t,y)),\\
u(T,x) = G(x, m(T)), \\
m_x(0) = m_{x,0},
\end{cases}
\label{eqn:MFG}
\end{equation}
It is known, and largely exemplified in this paper, that well-posedness of \eqref{hjb} does not imply uniqueness of solution to \eqref{eqn:MFG}.

An alternative description of the macroscopic limit {\r stems from} the ansatz that the solution of the Hamilton-Jacobi-Bellman system \eqref{hjb} is of the form
\[
v^{N,i}(t, \bm{x}) = U^N(t, x_i, m^{N,i}_{\bm{x}}),
\]
for some $U^N: [0,T] \times \S \times P(\S) \ra \R$. Assuming $U^N$ admits a limit $U$ as $N \ra +\infty$, we formally obtain that $U$ solves the following equation, that will be referred to as the {\em master equation}:
\be \label{master} \tag{MAS}
\begin{cases}
 -\frac{\partial U}{\partial t}&\!\!\!\!\!\! \!\!\! \!\!\! \!\!\! \!\!\! \!\!\! \!(t,x,m) \!\! +\!\! H(x,\! \Delta^x U(t,x,m)) \!-\!\! \int_{\Sigma} \! D^m \! U(t,x,m,y) \! \cdot \! a^*\!(y, \Delta^y U(t,y,m)) dm(y) \!\!= \!\!F(x,m)  \\
  U(T,x,m) &\!\!\!  = G(x,m), \ \ \ (x,m) \in \Sigma \times P(\Sigma),
\end{cases}
\ee
where the derivative $D^m U : [0,T] \times \Sigma \times P(\Sigma) \times \Sigma \to \mathbb{R}^{\S}$ with respect to $m \in P(\S)$ is defined by
\begin{equation}
\label{ex:def_der}
[D^m U(t,x,m,y)]_z := \lim_{s \downarrow 0} \frac{U(t,x,m + s(\delta_z - \delta_y)) - U(t,x,m)}{s}.
\end{equation}
We conclude this section by recalling that uniqueness in both \eqref{eqn:MFG} and \eqref{master} is guaranteed if the cost function $F$ and $G$ are {\em monotone} in the Lasry-Lions sense, i.e. for every $m, m' \in P(\Sigma)$, 
 \begin{equation}
 \label{mon}
 \sum_{x \in \Sigma} (F(x,m) - F(x,m')){\r(m_x - m'_x)} \geq 0, 
 \end{equation}
and the same for $G$. We are interested here in examples that violate this monotonicity condition.

\section{An example of non uniqueness} \label{SectExample}

We consider now a special example within the class of models described above. We let $\S := \{-1,1\}$ be the state space. An element $m \in P(\S)$ can be identified with its mean {\r $m_1 - m_{-1}$}; so from now we {\r write $m \in [-1,1]$ to denote the mean, while the element of $P(\S)$ will be denoted only in vector form $(m_1, m_{-1})$.}  We also write $\a^i(t,{\bm x})$ for $\a^i_{-x_i}(t,{\bm x})$, i.e. the rate at which player $i$ {\em flips} its state from $x_i$ to $-x_i$.
Moreover we choose
\[
L(x,a) := \frac{a^2}{2}, \qquad F(x,m) \equiv 0, \qquad G(x,m) := -mx.
\]
The final cost favors alignment with the majority, while the running cost is a simple quadratic cost. 
Compared to condition \eqref{mon}, note that the final cost is {\em anti-monotonic}, as 
\[
 \sum_{x \in \Sigma} (G(x,m) - G(x,m')){\r(m_x - m'_x)} = -(m-m')^2 \leq0.
 \]
The associated Hamiltonian is given by
\begin{equation}
\label{ex:min}
H(x,p) = {\r \sup_{a \geq 0} \left\{a p_{-x} - \frac{a^2}{2} \right\} = \frac{(p^-_{-x})^2}{2},}
\end{equation}
with $a^*(x,p) = p_{-x}	^-$, where $p^-$ denotes the negative part of $p$. From now on, we identify $p$ with $p_{-x}\in\mathbb{R}$ and $\Delta^x u$ with its non-zero component $u(-x)-u(x)$.

\subsection{The mean field game system} \label{SubMFG}

The first equation in \eqref{eqn:MFG}, i.e the HJB equation for the value function $u(t,x)$, reads,
using \eqref{ex:min},

\begin{equation}
\label{eqn:hjb0}
\begin{cases}
-\frac{d}{dt} u(t,x) + 
\frac12
\left[(\Delta^x u(t,x))^-\right]^2 =  0 \\ 
u(T,x) = -m(T)x
\end{cases}
\end{equation}
Now define $z(t) := u(t,-1) - u(t,1)$. Subtracting the equations \eqref{eqn:hjb0} for $x = \pm 1$ and observing that
\[
\left[(\Delta^x u(t,-1))^-\right]^2 - \left[(\Delta^x u(t,1))^-\right]^2 = z|z|,
\]
we have that $z(t)$ solves
\be
\begin{cases}
\dot{z} = \frac{z|z|}{2}\\ 
z(T)= 2m(T).
\end{cases}
\label{ex:hjb}
\ee
This equation must be coupled with the forward Kolmogorov equation, 
i.e. the second equation in \eqref{eqn:MFG},
that reads $\dot{m} = -m|z| +z$. The mean field game system takes therefore the form:
\be
\begin{cases}
\dot{z} = \frac{z|z|}{2}\\
\dot{m} = -m|z| +z\\
z(T)=2m(T)\\
m(0)=m_0.
\end{cases}
\label{mfg}
\ee

\begin{prop}
\label{prop1}
Let $T(m_0)$ be the unique solution in $\left[\frac12,2\right]$ of the equation
\be 
\label{smalltime}
|m_0| = \frac{(2T-1)^2 (T+4)}{27 T}.
\ee
Then, for every $m_0 \in [-1,1] \setminus \{ 0 \}$, system \eqref{mfg} admits
\begin{itemize}
\item[(i)]
a unique solution for $T<T(m_0)$;
\item[(ii)]
two distinct solutions for $T = T(m_0)$;
\item[(iii)] 
three distinct solutions for $T > T(m_0)$.
\end{itemize}
If $m_{0} = 0$, then $T(0) = 1/2$ and \eqref{mfg} admits
\begin{itemize}
\item[(i)]
a unique solution for $T \leq 1/2$;
\item[(ii)] 
three distinct solutions for $T > 1/2$:  the constant zero solution, $(z_{+},m_{+})$, and $(z_{-},m_{-})$, where $m_{+}(t) = -m_{-}(t) > 0 $ for every $t\in (0,T]$.
\end{itemize}
\end{prop}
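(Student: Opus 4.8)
The plan is to exploit the fully triangular structure of \eqref{mfg}: the equation for $z$ is a decoupled scalar ODE, so one can solve it explicitly in terms of the single real parameter $\zeta:=z(T)=2m(T)$, then solve the (now \emph{linear}) equation for $m$, and finally recast the consistency requirement $\zeta=2m(T)$ as a scalar equation whose roots are in bijection with the solutions of \eqref{mfg}. Counting solutions thus reduces to counting roots of an explicit cubic.

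First I would dispose of the degenerate branch: since $z\equiv0$ solves $\dot z=z|z|/2$ and the right-hand side is $C^{1}$, every nonzero solution keeps a constant sign on $[0,T]$, and $z\equiv0$ forces $m\equiv m_{0}$, hence, through $z(T)=2m(T)$, $m_{0}=0$. Using the symmetry $(z,m,m_{0})\mapsto(-z,-m,-m_{0})$, which sends solutions of \eqref{mfg} to solutions of \eqref{mfg} with the opposite initial datum, it suffices to take $m_{0}\ge0$ and to count separately the solutions with $z>0$ and those with $z<0$, the latter being in bijection with the $z>0$ solutions for the datum $-m_{0}$. For $\zeta:=z(T)>0$ one integrates $\dot z=z^{2}/2$ backward from $T$ to obtain $z(t)=\tfrac{2\zeta}{\zeta(T-t)+2}>0$ on $[0,T]$, so that $|z|=z$ and the $m$-equation becomes the linear equation $\dot m=z(1-m)$; integrating it from $m_{0}$ gives
\begin{equation*}
1-m(t)=(1-m_{0})\Bigl(\frac{\zeta(T-t)+2}{\zeta T+2}\Bigr)^{2}\in[0,\,1-m_{0}]\subseteq[0,2],
\end{equation*}
so $m(t)\in[-1,1]$ automatically. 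Imposing $\zeta=2m(T)$ then yields, after simplification, the cubic equation
\begin{equation*}
\phi(\zeta):=(2-\zeta)(\zeta T+2)^{2}=8(1-m_{0}),
\end{equation*}
and solutions of \eqref{mfg} with $z(T)=\zeta>0$ correspond exactly to its roots $\zeta\in(0,2]$; for $z<0$ one replaces the right-hand side by $8(1+m_{0})$.

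Next I would carry out an elementary study of $\phi$ on $(0,2]$: one has $\phi(0)=8$, $\phi(2)=0$ and $\phi'(\zeta)=(\zeta T+2)(4T-2-3\zeta T)$, so $\phi$ is strictly decreasing on $(0,\infty)$ when $T\le\tfrac12$, whereas for $T>\tfrac12$ it increases on $(0,\zeta^{*})$ and decreases afterwards, $\zeta^{*}=\tfrac{2(2T-1)}{3T}\in(0,2)$ being a local maximum with $\phi(\zeta^{*})=\tfrac{32(T+1)^{3}}{27T}>8$. Hence the equation with right-hand side $8(1-m_{0})\in[0,8)$ (valid since $m_{0}\in(0,1]$) always has exactly one root in $(0,2]$, while the equation with right-hand side $8(1+m_{0})\in(8,16]$ has no root when $T\le\tfrac12$, and for $T>\tfrac12$ has two, one, or zero roots according as $8(1+m_{0})$ is less than, equal to, or greater than $\phi(\zeta^{*})$. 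Adding the constant-zero solution precisely when $m_{0}=0$, the total number of solutions of \eqref{mfg} is therefore $1$, $2$, or $3$, governed by the position of $8(1+m_{0})$ relative to $\phi(\zeta^{*})$.

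Finally I would translate the threshold $8(1+m_{0})=\phi(\zeta^{*})$ into the form in the statement. The key algebraic identity is $4(T+1)^{3}-27T=(2T-1)^{2}(T+4)$, whence $8(1+m_{0})=\phi(\zeta^{*})$ is equivalent to $m_{0}=\tfrac{(2T-1)^{2}(T+4)}{27T}=:g(T)$; the same identity shows $\phi(\zeta^{*})>8$ for $T\ne\tfrac12$. A short computation ($g'(T)=\tfrac{1}{27}(8T+12-4/T^{2})$ vanishes at $T=\tfrac12$ and has positive derivative, $g(\tfrac12)=0$, $g(2)=1$) shows that $g$ restricts to an increasing bijection $[\tfrac12,2]\to[0,1]$; this is exactly the map $T(m_{0})=g^{-1}(|m_{0}|)\in(\tfrac12,2]$ of the statement, and $m_{0}<g(T)\iff T>T(m_{0})$. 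Combining this with the root count yields (i)--(iii) for $m_{0}\ne0$. The case $m_{0}=0$ is checked directly: for $T\le\tfrac12$ only the constant-zero solution survives, while for $T>\tfrac12$ the equation $\phi(\zeta)=8$ has a unique root $\zeta_{1}\in(0,2)$, producing the symmetric pair $(z_{\pm},m_{\pm})$ with $m_{+}(t)=-m_{-}(t)>0$ on $(0,T]$ since $m_{\pm}$ are monotone with $m_{\pm}(0)=0$. The main obstacle I anticipate is not conceptual but careful bookkeeping — verifying that \emph{every} root of the scalar equation yields an admissible solution and that none are missed (no sign changes of $z$, no roots with $\zeta>2$, no spurious zero solution) — together with pinning down the algebraic identity and the monotonicity of $g$ so that $T(m_{0})$ is well defined and all inequalities point the right way.
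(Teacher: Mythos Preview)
Your proposal is correct and follows essentially the same route as the paper: integrate the decoupled $z$-equation explicitly, then the linear $m$-equation, reduce the consistency condition $z(T)=2m(T)$ to a scalar cubic, and count roots by locating the critical point. Your parametrization by $\zeta=z(T)=2m(T)$ and the factored form $\phi(\zeta)=(2-\zeta)(\zeta T+2)^{2}$ are cosmetic variants of the paper's cubic $f(M)$ in $M=m(T)$, and your explicit verification that $g:[\tfrac12,2]\to[0,1]$ is an increasing bijection fills in a detail the paper leaves to the statement.
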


\begin{proof}
Note that \eqref{ex:hjb} can be solved as a final value problem, giving
\be
\label{solhjb}
z(t) =  \frac{2m(T)}{|m(T)|(T-t) +1}.
\ee
This can then be inserted in the forward Kolmogorov equation
$\dot{m} = -m|z| +z$,
giving as unique solution
\be
\label{solkol}
m(t) = (m_0 - \text{sgn}(m(T)))\left(\frac{|m(T)|(T-t)+1}{|m(T)|T+1}\right)^2 + \text{sgn}(m(T)).
\ee
These are actually solutions of \eqref{mfg} if and only if the consistency relation obtained by setting $t=T$ in \eqref{solkol} holds, i.e.  if and only if $m(T) = M$ solves
\be
\label{cons}
T^2 M^3 + T(2-T)M |M| + (1-2T)M - m_0 =0.
\ee
Moreover, distinct solutions of \eqref{cons} correspond to distinct solutions of \eqref{mfg}. 
We first look for nonnegative solutions of \eqref{cons}. Set
\[
f(M) := T^2 M^3 + T(2-T)M^2  + (1-2T)M - m_0.
\]
Note that 
\[
f'(M) < 0 \ \ \iff \ \ M \in \left(-\frac{1}{T}, \frac{2T-1}{3T} \right).
\]
If $T \leq \frac12$ then $f$ is strictly increasing in $(0,+\infty)$, so the equation $f(M) = 0$ admits a unique nonnegative solution if $m_0 \geq 0$, otherwise there is no nonnegative solution. If $T > \frac12$, then $f$ restricted to $(0,+\infty)$ has a global minimum at $M^* = \frac{2T-1}{3T}$. If $m_0>0$ then there is still a unique nonnegative solution, while for $m_0 = 0$ there are two nonnegative solution, one of which is zero. If, instead, $m_0 < 0$, so that $f(0) > 0$, the equation $f(M) = 0$ has zero, one or two nonnegative solutions, depending on whether $f(M^*) >0$, $f(M^*) = 0$ or $f(M^*) < 0$ respectively. Observing that
\[
f(M^*) = -m_0 - \frac{(2T-1)^2 (T+4)}{27 T},
\]
we see that those three alternatives occur if $T < T(m_0)$, $T = T(m_0)$ and $T > T(m_0)$ respectively. The case $M\leq 0$ is treated similarly.  

\end{proof}

\subsection{The Master Equation} \label{SubMaster}

Identifying again a probability on $\S$ with its mean $m$, {\r using the expression for $H$ and its minimizer given in \eqref{ex:min}}, the Master Equation \eqref{master} takes the form
\be
\label{ex:master}
\begin{cases}
 -\frac{\partial U}{\partial t}(t,x,m)&\!\!\!\!\!  +  \frac12 \left[ \left(\Delta^x U(t,x,m)\right)^- \right]^2 \! -{\r D^m U(t,x,m,1)} \left(\Delta^x U(t,1,m)\right)^- \frac{1+m}{2}  \\
&\hspace{1.2 cm}-{\r D^m U(t,x,m,-1)} \left(\Delta^x U(t,-1,m)\right)^- \frac{1-m}{2}= F(x,m),  \\
U&\!\!\!\!\!\!\!\!\!\!\!\!\!\!\!\!\!\!\!\!\!\!\!\!\!\!\!\!\!\!\!\!\!\!\!(T,x,m) = G(x,m), \ \ \ (x,m) \in \{-1,1\} \times [-1,1].
\end{cases}
\ee
{\r In \eqref{ex:master}, the derivative $D^m U$ is still intended in the sense introduced in \eqref{ex:def_der}, but identifying the resulting vector with its non-zero component (e.g. $D^m U(t,x,m,1) = [D^m U(t,x,m,1)]_{-1}$ $=  \frac{\partial}{\partial(m_{-1} - m_1)}U(t,x,m)$). Similarly, we identify the vector $\Delta^x U$ with its non-zero component.}
Setting 
\[
{\r Z(t,m) := U(T- t, -1,m) - U(T- t,1,m),}
\]
we easily derive a closed equation for $Z$:
\be
\label{conslaw}
\begin{cases}
\frac{\partial Z}{\partial t} +  
 \frac{\partial}{\partial m} \left(m \frac{Z|Z|}{2} - \frac{Z^2}{2}\right) =0, \\
Z(0,m) =2m, 
\end{cases}
\ee 
{\r where $\frac{\partial}{\partial m}$ is denoting the differentiation in the usual sense with respect to $m \in [-1,1]$. In particular, observe that $\frac{\partial}{\partial m} = \frac{1}{2}\frac{\partial}{\partial (m_{-1} - m_1)} $.}

Note that this equation has the form of a scalar {\em conservation law}
\be \label{gencons}
\begin{cases}
\frac{\partial Z}{\partial t}(t,m) +  \frac{\partial}{\partial m} \mathfrak{g}(m,Z(t,m)) =0 \\
Z(0,m) = \mathfrak{f}(m).
\end{cases}
\ee
 Scalar conservation laws typically possess unique smooth solutions for small time, but develop singularities in finite time: weak solutions exist but uniqueness may fail. To recover uniqueness the notion of {\em entropy solution} is introduced. A simple sufficient condition can be given for piecewise smooth functions (see \cite{dafermos}).

\begin{prop} \label{lax}
Let $Z(t,m)$ be a piecewise $\mathcal{C}^1$ function, which is $\mathcal{C}^1$ outside a $\mathcal{C}^1$ curve $m = \gamma(t)$, and assume the following conditions hold:
\begin{itemize}
\item[(i)]
$Z$ solves \eqref{gencons} in the classical sense outside the curve $m = \gamma(t)$.
\item[(ii)]
The initial condition $Z(0,m) = \mathfrak{f}(m)$ holds for every $m$;
\item[(iii)]
 Denoting 
\[
Z_+(t) := \lim_{m \downarrow \gamma(t)} Z(t,m) , \ \ \ Z_-(t) := \lim_{m \uparrow \gamma(t)} Z(t,m),
\]
we have that, for every  $t \geq 0$ and every $c$ strictly between $Z_-(t)$ and $Z_+(t)$,
\be \label{rh}
\dot{\gamma}(t) = \frac{\mathfrak{g}(\gamma(t),Z_-(t)) - \mathfrak{g}(\gamma(t), Z_+(t))}{Z_-(t)-Z_+(t)},
\ee
\be \label{eqlax}
\frac{\mathfrak{g}(\gamma(t),c) - \mathfrak{g}(\gamma(t), Z_+(t))}{c-Z_+(t)} < \dot{\gamma}(t) < \frac{\mathfrak{g}(\gamma(t),c) - \mathfrak{g}(\gamma(t), Z_-(t))}{c-Z_-(t)}.
\ee
\end{itemize}
Then, $Z$ is the unique entropy solution of \eqref{gencons}.
\end{prop}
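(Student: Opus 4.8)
The plan is to check that $Z$ meets the two requirements defining the Kruzhkov entropy solution of \eqref{gencons} --- being a weak solution, and satisfying the Kruzhkov entropy inequalities --- and then to conclude by the classical $L^{1}$-uniqueness theorem for scalar conservation laws; this is the argument sketched in \cite{dafermos}, and I only describe the salient points. To see that $Z$ is a weak solution, fix $\phi\in\mathcal{C}^{\infty}_{c}\big([0,\infty)\times\mathbb{R}\big)$ and decompose $\iint\!\big(Z\,\phi_{t}+\mathfrak{g}(m,Z)\,\phi_{m}\big)\,dm\,dt$ over the two open regions $\{m<\gamma(t)\}$ and $\{m>\gamma(t)\}$, on each of which $Z$ is $\mathcal{C}^{1}$, and apply the divergence theorem. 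By (i) the interior integrands vanish; by (ii) the boundary contributions at $t=0$ add up to $-\int\mathfrak{f}(m)\,\phi(0,m)\,dm$; and the two line integrals along the curve $m=\gamma(t)$ combine into $\int_{0}^{\infty}\big[\big(\mathfrak{g}(\gamma(t),Z_{-}(t))-\mathfrak{g}(\gamma(t),Z_{+}(t))\big)-\dot\gamma(t)\big(Z_{-}(t)-Z_{+}(t)\big)\big]\phi(t,\gamma(t))\,dt$, whose integrand vanishes by the Rankine--Hugoniot identity \eqref{rh}. Hence $Z$ is a weak solution of \eqref{gencons}, and it attains the datum $\mathfrak{f}$ by (ii).

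For the entropy inequalities, fix $k\in\mathbb{R}$ and take the Kruzhkov entropy--flux pair $\eta(Z)=|Z-k|$, $q(m,Z)=\mathrm{sgn}(Z-k)\big(\mathfrak{g}(m,Z)-\mathfrak{g}(m,k)\big)$; repeating the above computation against $\phi\ge 0$, a chain-rule identity together with (i) shows that $\partial_{t}\eta(Z)+\partial_{m}q(m,Z)+\mathrm{sgn}(Z-k)\,\partial_{m}\mathfrak{g}(m,k)=0$ in the classical sense off the curve (here $\partial_{m}\mathfrak{g}(m,k)$ is the derivative of $\mathfrak{g}$ in its first slot), so the only distributional mass of the left-hand side sits on $m=\gamma(t)$, where it equals $\int_{0}^{\infty}\big[\dot\gamma(t)\big(\eta(Z_{-})-\eta(Z_{+})\big)-\big(q(\gamma,Z_{-})-q(\gamma,Z_{+})\big)\big]\phi(t,\gamma(t))\,dt$. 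Inserting \eqref{rh} for $\dot\gamma(t)$ and running a short case analysis on the location of $k$ relative to $Z_{\pm}(t)$: the bracket vanishes when $k$ is outside the interval with endpoints $Z_{\pm}(t)$, and for $k$ strictly between them the $\dot\gamma$-terms cancel and the bracket becomes $\le 0$ if and only if $\dot\gamma(t)\ge\dfrac{\mathfrak{g}(\gamma(t),k)-\mathfrak{g}(\gamma(t),Z_{+}(t))}{k-Z_{+}(t)}$, which is exactly the first of the two chord inequalities in \eqref{eqlax} (the second then following via \eqref{rh}). Thus $\partial_{t}\eta(Z)+\partial_{m}q(m,Z)+\mathrm{sgn}(Z-k)\,\partial_{m}\mathfrak{g}(m,k)\le 0$ in the sense of distributions for every $k$, i.e.\ $Z$ is an entropy solution.

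Finally, since $\mathfrak{g}$ is $\mathcal{C}^{1}$ (hence locally Lipschitz in its second argument) and $Z$ is bounded, Kruzhkov's $L^{1}$-contraction/uniqueness theorem (see \cite{dafermos}) shows that \eqref{gencons} has at most one entropy solution with the given datum, and the two previous steps then identify $Z$ as that solution. The point that requires genuine care is the entropy verification: one must keep track both of the jump contribution at the shock and of the explicit dependence of $\mathfrak{g}$ on $m$ (whence the source term $\mathrm{sgn}(Z-k)\,\partial_{m}\mathfrak{g}(m,k)$), and check that after substituting \eqref{rh} the residual scalar inequality is, for each intermediate $k$, precisely \eqref{eqlax}; everything else is routine divergence-theorem bookkeeping, which for the concrete flux $\mathfrak{g}(m,Z)=mZ|Z|/2-Z^{2}/2$ of \eqref{conslaw} is entirely elementary.
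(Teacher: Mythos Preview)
The paper does not actually give its own proof of this proposition; it is stated as a known criterion with a reference to \cite{dafermos}. Your sketch is precisely the standard argument behind that citation --- verify the weak formulation via the divergence theorem and Rankine--Hugoniot, then check the Kruzhkov inequalities by reducing the shock contribution to the Lax chord condition, and conclude with $L^1$ uniqueness --- and it is correct.

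One small expository slip: in the entropy step for $k$ strictly between $Z_{\pm}(t)$ you write that ``the $\dot\gamma$-terms cancel'' and then nevertheless state a condition involving $\dot\gamma$. What actually happens is that, after substituting \eqref{rh} to eliminate (say) $\mathfrak{g}(\gamma,Z_-)$, the bracket collapses to $\pm 2\big[\dot\gamma(t)(k-Z_+(t))-(\mathfrak{g}(\gamma(t),k)-\mathfrak{g}(\gamma(t),Z_+(t)))\big]$, whose sign is governed exactly by the first inequality in \eqref{eqlax}; the second inequality is then equivalent to it via \eqref{rh}, as you note. With that clarification the argument is complete.
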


Condition \eqref{rh} is called the {\em Rankine-Hugoniot condition}, while  \eqref{eqlax} is called the {\em Lax condition}. When specialized to the case $\mathfrak{g}(m,z) := m \frac{z|z|}{2} - \frac{z^2}{2}$ and $\gamma(t) \equiv 0$ we simply obtain
\be \label{entr}
Z_+(t) = - Z_-(t) \geq 0.
\ee
For equation \eqref{conslaw}, the entropy solution can be explicitly found. Let
\be
g(M,t,m) := t^2 M^3 + t(2-t)M |M| + (1-2t)M - m
\ee
and $M(t,m)$ denote the unique solution to $g(M,t,m) =0$ with the same sign of $m$, if $m\neq 0$; 
$M$ is defined for any time and let $M(t,0) \equiv 0$. Define
\be
Z(t,m) := \frac{2 M(t,m)}{t|M(t,m)| +1}:
\label{zstar}
\ee
such function has a unique discontinuity in $m=0$, for {\r $t>1/2$}, and is 
$\mathcal{C}^1$ outside. {\r However, observe that equation \eqref{conslaw} must be solved in the finite interval $t \in  [0,T]$, where $T$ is the final time appearing in \eqref{ex:master}. Thus, for $T < 1/2$ the solution is regular.}
\begin{thm}
The function $Z$ defined in \eqref{zstar} is the unique entropy admissible weak solution to \eqref{conslaw}.
\end{thm}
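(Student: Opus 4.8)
The plan is to verify the hypotheses of Proposition \ref{lax} for the conservation law \eqref{conslaw}, i.e.\ for the flux $\mathfrak{g}(m,z)=m\frac{z|z|}{2}-\frac{z^{2}}{2}$, initial datum $\mathfrak{f}(m)=2m$, and shock curve $\gamma(t)\equiv 0$; recall that for this choice condition (iii) of that proposition collapses to \eqref{entr}. The preliminary step is to make sense of $Z$ itself: for $m\neq 0$ the root $M(t,m)$ of $g(\cdot,t,m)=0$ with the same sign as $m$ is, by the monotonicity analysis already carried out in the proof of Proposition \ref{prop1} (for $m>0$ it lies beyond the minimizer $M^{*}=(2t-1)/(3t)$, where $\partial_{M}g>0$; for $m<0$ symmetrically), a simple root, so the implicit function theorem yields that $M$, hence $Z=2M/(t|M|+1)$, is $\mathcal{C}^{1}$ on $\{t\geq 0\}\times([-1,1]\setminus\{0\})$; using $\partial_{M}g(0,t,0)=1-2t$ one checks separately that $Z$ stays $\mathcal{C}^{1}$ across $m=0$ as long as $t<1/2$, so no genuine discontinuity forms before time $1/2$. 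Boundedness and measurability of $Z$ on $[0,T]\times[-1,1]$, needed for it to be an admissible weak solution, follow from the boundedness of $M(t,m)$, and one records that $m=\pm1$ are invariant for the characteristic dynamics so that no boundary conditions intervene.

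Next I would check the initial condition (ii): $g(M,0,m)=M-m$, so $M(0,m)=m$ and $Z(0,m)=2m=\mathfrak{f}(m)$. For the classical equation (i), differentiating the identity $g(M(t,m),t,m)\equiv 0$ gives $\partial_{m}M=1/\partial_{M}g$ and $\partial_{t}M=-\partial_{t}g/\partial_{M}g$; substituting $Z=2M/(t|M|+1)$ into \eqref{conslaw} — using $Z|Z|=4M|M|/(t|M|+1)^{2}$ and $m|Z|-Z=2(m|M|-M)/(t|M|+1)$ — reduces the equation, after cancellation, to the polynomial identity $\partial_{t}g\cdot(t|M|+1)=2(m|M|-M)$, which is verified once $m$ is eliminated through $g=0$, i.e.\ $m=t^{2}M^{3}+t(2-t)M|M|+(1-2t)M$. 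Equivalently, one may observe that the characteristics of \eqref{conslaw} are the time-reversed trajectories of the mean field game system \eqref{mfg}: along the characteristic issued from $(0,m_{0})$ the value $Z$ satisfies the autonomous equation $\dot{Z}=-\frac{Z|Z|}{2}$, and the curve $\{(t,m):M(t,m)=m_{0}\}$ parametrizes precisely that characteristic, so \eqref{zstar} is the classical solution furnished by the method of characteristics away from the shock.

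Finally I would treat the shock conditions (iii) on $\gamma\equiv 0$. Since $g$ is odd under $(M,m)\mapsto(-M,-m)$, one has $M(t,-m)=-M(t,m)$, hence $Z$ is odd in $m$ and $Z_{-}(t)=-Z_{+}(t)$. As $m\downarrow 0^{+}$, $M(t,m)$ converges to the largest nonnegative root of $g(\cdot,t,0)=0$, namely $M_{+}(t)=\frac{t-2+\sqrt{t^{2}+4t}}{2t}$, which is strictly positive exactly for $t>1/2$; thus $Z_{+}(t)=\frac{2M_{+}(t)}{tM_{+}(t)+1}\geq 0$, with equality precisely when $t\leq 1/2$. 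Because $\mathfrak{g}(0,z)=-z^{2}/2$ depends on $z$ only through $z^{2}$ and $Z_{+}^{2}=Z_{-}^{2}$, the Rankine--Hugoniot relation \eqref{rh} reads $0=0$, and the Lax inequalities \eqref{eqlax}, after inserting $\mathfrak{g}(0,\cdot)$, become $-\frac{Z_{+}+c}{2}<0<-\frac{Z_{-}+c}{2}$ for every $c$ strictly between $Z_{-}$ and $Z_{+}$, i.e.\ $-Z_{+}<c<Z_{+}$, which holds since $Z_{-}=-Z_{+}$. This is exactly the specialized condition \eqref{entr}, so all hypotheses of Proposition \ref{lax} hold and $Z$ is the unique entropy admissible weak solution of \eqref{conslaw}.

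I expect the verification of (i) to be the main obstacle: it requires carrying the implicit differentiation of the selected root $M$ through the nonsmooth terms $M|M|$ and $|M|$ present both in $g$ and in the formula for $Z$, and the eventual reduction of \eqref{conslaw} to a polynomial identity is the heart of the calculation. The second delicate point is the applicability of the implicit function theorem — i.e.\ that the root of interest is simple — which is precisely where the structural information from the proof of Proposition \ref{prop1} is indispensable. The remaining ingredients (initial condition, oddness, and the shock relations, which collapse to \eqref{entr}) are comparatively routine.
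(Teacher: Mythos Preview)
Your proposal is correct and follows the same approach as the paper: both apply Proposition~\ref{lax} with $\gamma\equiv 0$ and reduce the shock conditions to \eqref{entr} via the oddness of $g$ under $(M,m)\mapsto(-M,-m)$. Your write-up is in fact considerably more thorough than the paper's own proof, which only checks \eqref{entr} and remarks on the invariance of the domain $[-1,1]$, leaving the classical solvability away from the shock and the initial condition implicit; your explicit verification of (i) and (ii) via the implicit function theorem and the characteristic interpretation fills in exactly what the paper elides.
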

\begin{proof}
From the properties of $g(M,t,m)$, it follows that
$$
\lim_{m \downarrow 0} M(t,m)= - \lim_{m \uparrow 0} M(t,m) \geq 0,
$$ 
for any time. These limits correspond to the solutions $m_+$ and $m_-$ of Proposition~\ref{prop1}, evaluated at the terminal time. Therefore \eqref{entr} is satisfied. We remark that the conservation law is set in the domain $[-1,1]$ without any boundary condition, but this is not a problem as we have invariance of the domain under the action of the characteristics. 
\end{proof}

\begin{rem}
We observe that to the entropy solution \eqref{zstar} of \eqref{conslaw} there corresponds a unique solution of \eqref{ex:master}. {\r It can be constructed via the method of characteristic curves, in terms of a specific solution to the mean field game system for the couple $(u,m)$, the one that corresponds to the solution to \eqref{mfg} employed in the definition of \eqref{zstar}.}
\end{rem}

It is known that, if there were a regular solution to the  master equation \eqref{conslaw},  thus Lipschitz in $m$, then this  solution would provide a unique solution to the mean field game system \eqref{mfg}, since the KFP equation would be well posed for any initial condition, when using  $z(t)=Z(T-t,m(t))$ induced by the solution to the master equation:
\be
\begin{cases}
&\dot{m}=-m |Z(T-t,m)| + Z(T-t,m)\\
&m(0)=m_0.
\end{cases}
\label{induced}
\ee
In our example there are no regular solutions to the master equation; however the entropy solution still induces a unique mean field game solution, if $m_0 \neq 0$.

\begin{prop}
Let $Z$ be the entropy solution defined in \eqref{zstar}. Then \eqref{induced} admits a unique solution $m^*$, for any $T$, if $m_0\neq 0$: it is the unique solution which does not change sign, for any time.
\label{mindotta}
\end{prop}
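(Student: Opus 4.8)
The plan is to reduce to the case $m_{0}>0$ — the case $m_{0}<0$ follows by the symmetry $Z(s,-m)=-Z(s,m)$, itself a consequence of the obvious antisymmetry of $g$ — and then to establish existence and uniqueness separately. Since $Z(s,\cdot)$ is $\mathcal{C}^{1}$ away from $m=0$, the only genuine difficulty lies in uniqueness across the discontinuity line $m=0$.

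\emph{Existence.} Let $M>0$ be the unique positive root of the consistency equation \eqref{cons} (it exists because, in the notation of the proof of Proposition~\ref{prop1}, $f(0)=-m_{0}<0$), and let $(z^{*},m^{*})$ be the corresponding solution of \eqref{mfg} given by \eqref{solhjb}--\eqref{solkol}. Using the identity $m_{0}-1=(M-1)(MT+1)^{2}$, which is read off directly from \eqref{cons}, one rewrites \eqref{solkol} as $m^{*}(t)=(M-1)\bigl(M(T-t)+1\bigr)^{2}+1$; from this one sees at once that $m^{*}(t)\in[m_{0},1]$ for all $t$, so $m^{*}$ never vanishes, and that $g\bigl(M,T-t,m^{*}(t)\bigr)=0$. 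Since $M$ has the sign of $m^{*}(t)$, it follows that $M(T-t,m^{*}(t))=M$, hence $Z(T-t,m^{*}(t))=\tfrac{2M}{(T-t)M+1}=z^{*}(t)$ by \eqref{solhjb}; substituting this into the Kolmogorov equation of \eqref{mfg} shows that $m^{*}$ solves \eqref{induced}. By construction $m^{*}$ does not change sign.

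\emph{Uniqueness.} Let $m$ be an arbitrary solution of \eqref{induced} on $[0,T]$ with $m(0)=m_{0}>0$; the key claim is that $m(t)>0$ throughout. On any interval where $m>0$ one has $Z(T-t,m)>0$, so there \eqref{induced} reads $\dot m=Z(T-t,m)(1-m)$; since $Z$ is bounded, the linear equation satisfied by $1-m$ yields $1-m(t)=(1-m_{0})\exp\bigl(-\int_{0}^{t}Z(T-s,m(s))\,ds\bigr)>0$, whence $\dot m>0$ there, i.e.\ $m$ is strictly increasing as long as it stays positive. If $m$ vanished at a first time $t^{*}\in(0,T]$, this monotonicity on $[0,t^{*})$ would force $m(t^{*})\ge m_{0}>0$, a contradiction. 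Thus every solution remains in $[m_{0},1]$, a region bounded away from $m=0$, on which the right-hand side of \eqref{induced} is $\mathcal{C}^{1}$ in $m$ (because $Z$ is $\mathcal{C}^{1}$ off the line $m=0$); Picard--Lindel\"{o}f then gives uniqueness on $[0,T]$, so the solution is $m^{*}$. The case $m_{0}<0$ follows by applying this to $-m(\cdot)$.

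The crux of the argument is the uniqueness statement in the presence of the discontinuity of $Z(T-t,\cdot)$ at $m=0$: one must rule out that a trajectory started at $m_{0}\neq0$ ever reaches the line $m=0$, which works precisely because there the vector field is strictly \emph{repelling} ($Z(T-t,m)>0$ for $m>0$ and $<0$ for $m<0$). The only other step requiring (routine) care is the algebraic identity $Z(T-t,m^{*}(t))=z^{*}(t)$, which identifies the selected mean field game trajectory with the characteristic curve of the conservation law \eqref{conslaw}.
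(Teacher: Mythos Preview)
Your proof is correct and follows essentially the same idea as the paper: for $m_{0}>0$ the entropy solution satisfies $Z(T-t,m)>0$ on $\{m>0\}$, so any trajectory of \eqref{induced} is strictly increasing, stays in $[m_{0},1]$ away from the discontinuity, and uniqueness then comes from the Lipschitz continuity of $Z$ on that strip. The only difference is organizational: the paper obtains existence by iterating local Picard solutions and observing $\dot m>0$ at each step, whereas you write down the explicit MFG trajectory $m^{*}$ attached to the positive root $M$ of \eqref{cons} and verify the characteristic identity $Z(T-t,m^{*}(t))=z^{*}(t)$ directly; your uniqueness argument via the closed formula $1-m(t)=(1-m_{0})\exp\!\bigl(-\int_{0}^{t}Z(T-s,m(s))\,ds\bigr)$ is likewise a cleaner variant of the paper's iteration. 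Both routes rest on the same key observation, namely that the entropy selection makes the line $m=0$ repelling.
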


\begin{proof}
Let $m_0>0$. If $t$ and $|m-m_0|$ are small  then 
 $Z(T-t,m)$ is regular (Lipschitz-continuous) and remains positive.
  So we have a unique solution to \eqref{induced}, for small time $t\in[0,t_0]$; moreover 
   it is such that $\dot{m}>0$ and hence in particular $m(t_0)>m_0$. Thus we can iterate this procedure starting from $m(t_0)>0$: we end up with the required solution, which is positive and such that $m(t)>m_0$ for any time. This solution is unique (for any $T$) since $Z(t,m)$ is Lipschitz for $m \in [m_0,1]$. In fact the other two solutions described in Proposition \ref{prop1} would require  the vector field $Z$ in \eqref{induced} to be negative for any time, and this is not possible when considering the entropy solution $Z$. The same argument gives the claim when $m_0<0$. 
\end{proof}

\subsection{Properties of the $N+1$-player game} \label{SubPrelimit}
 
We consider now the game played by $N+1$ players, labeled by the integers $\{0,1,\ldots,N\}$. By symmetry, we can interpret the player with label $0$ as the {\em representative player}. Let
\[
\mu^{N}_{\bm x} := \frac{1}{N} \sum_{i=1}^N \delta_{x_i = 1} \in \left\{0,\frac{1}{N}, \frac{2}{N}, \ldots, \frac{N-1}{N},1 \right\}
\]
be the fraction of the ``other'' players having state $1$. Comparing with the notations in \eqref{empmeas}, note that $\mu^{N}_{\bm x} = \frac{1+m_{\bm x}^{N+1,0}}{2}$. 
In what follows, we use $N$ rather than $N+1$ as apex in all objects related to the $N+1$-player game.
By symmetry again, the value function $v^{N,0}(t, {\bm x})$ introduced in \eqref{hjb} is of the form
\[
v^{N,0}(t, {\bm x}) = V^N(t,x_0,\mu^{N}_{\bm x}),
\]
where $V^N: [0,T] \times \{-1,1\} \times \left\{0,\frac{1}{N}, \frac{2}{N}, \ldots, \frac{N-1}{N},1 \right\} \ra \R$. Since the model we are considering, besides permutation invariance, is invariant by the sign change of the state vector, it follows that
\begin{equation}
\label{eqn:sym}
V^N(t,1,\mu^{N}_{\bm x}) = V^N(t,-1,1-\mu^{N}_{\bm x}).
\end{equation}
We can therefore redefine $V^N(t,\mu) := V^N(t,1,\mu)$; from the HJB systems \eqref{hjb} we derive the following closed equation for $V^N$:
\begin{equation} 
\label{V^N}
\begin{cases}
- \frac{d}{dt} V^N(t,\mu) & \!\!\!\!\!\!+ H(V^N\!({\r t,}1-\mu) \!-\! V^N({\r t,}\mu)) 
 \!=\! N \mu \! \left[V^N\!({\r t,}1-\mu) \!-\! V^N\!({\r t,}\mu)\right]^{\!-} \!\!\left[V^N\!\!\left({\r t,}\mu \!-\! \frac{1}{N}\right) \!-\! V^N\!({\r t,}\mu)\right] \\
& + N(1-\mu)\left[V^N\left({\r t,}\mu + \frac{1}{N}\right) - V^N\left({\r t,}1-\mu-\frac{1}{N}\right)\right]^{\!-} \!\!\left[V^N\left({\r t,}\mu + \frac{1}{N}\right) - V^N({\r t,}\mu)\right]\\
V^N(T,\mu) =&\!\!\!\!\!\! -(2\mu - 1),
\end{cases}
\end{equation}
with $H(p) = \frac{(p^-)^2}{2}$. {\r It is easy to check that, when imposing a final datum $V^N(T,\mu) \in [-1,1]$, any solution to system \eqref{V^N} is such that $V^N(t,\mu) \in [-1,1]$ for any $t < T$. The locally Lipschitz property of the vector field is thus enough to conclude the existence and uniqueness of solution for any $T >0$ for the above system with $|V^N(t,\mu)| \leq 1$.
Such solution allows }to obtain the unique Nash equilibrium, given by the feedback strategy
\be
\alpha^{0,N}(t, {\bm x}) = \left\{ \begin{array}{ll} \left[V^N(t, 1-\mu^N_{\bm x}) - V^N(t,\mu^N_{\bm x}) \right]^- & \mbox{for } x_0 = 1 \\  \left[V^N(t, 1-\mu^N_{\bm x}) - V^N(t,\mu^N_{\bm x}) \right]^+ & \mbox{for } x_0 =-1. \end{array} \right.
\label{optimalnash}
\ee
We now set 
\[
Z^N(t,\mu) := V^N(t,1-\mu) - V^N(t,\mu).
\]
The following result, that will be useful later, shows that if the representative player agrees with the majority, i.e. $x_0 = 1$ and $\mu^N_{\bm x} \geq \frac12$, or $x_0 = -1$ and $\mu^N_{\bm x} \leq \frac12$, then she/he keeps her/his state by applying the control zero.  

\begin{thm}
{\r For any $\mu\in S_N =\left\{0,\frac1N, \dots, 1\right\}$,} we have
\begin{align}
	&Z^N(t,\mu) \geq 0 \quad (\alpha^N(t,1,\mu) =0)  \quad \mbox{ if } \mu \geq \frac12  \label{Z1},\\
	&Z^N(t,\mu) \leq 0 \quad (\alpha^N(t,-1,\mu) =0) 	\quad \mbox{ if } \mu \leq \frac12. \label{Z2}
\end{align}
\label{charac}
\end{thm}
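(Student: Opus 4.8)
The plan is a backwards first‑crossing argument on the closed ODE system \eqref{V^N} for $V^N(t,\cdot)\colon S_N\to[-1,1]$. First I would reduce the two assertions to one. From the definition $Z^N(t,\mu)=V^N(t,1-\mu)-V^N(t,\mu)$ one gets $Z^N(t,1-\mu)=-Z^N(t,\mu)$ with no further input, so \eqref{Z1} at $\mu$ is exactly \eqref{Z2} at $1-\mu$; it thus suffices to prove $Z^N(t,\mu)\ge 0$ for every $\mu\in S_N$ with $\mu\ge\tfrac12$, the parenthetical statements on the equilibrium controls then being immediate from \eqref{optimalnash}. I would also record the anchor at $t=T$: the terminal datum $V^N(T,\mu)=-(2\mu-1)$ gives $Z^N(T,\mu)=2(2\mu-1)$, which is $\ge 0$ precisely when $\mu\ge\tfrac12$ (with equality only at $\mu=\tfrac12$ when $N$ is even). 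So the content of the theorem is that this sign is preserved as one integrates \eqref{V^N} backwards.

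Assume the conclusion fails, and let $\bar t:=\sup\{t\in[0,T]:\min_{\mu\in S_N,\,\mu>1/2}Z^N(t,\mu)\le 0\}$; since this minimum is $>0$ in a neighbourhood of $T$ one has $\bar t<T$, and by continuity $\min_{\mu>1/2}Z^N(\bar t,\mu)=0$. Fix a minimiser $\mu_*>\tfrac12$: then $Z^N(\bar t,\mu_*)=0$, $Z^N(\bar t,\mu)\ge 0$ for all $\mu>\tfrac12$, hence $Z^N(\bar t,\mu)\le 0$ for all $\mu<\tfrac12$ and $Z^N(\bar t,\tfrac12)=0$ when $\tfrac12\in S_N$; moreover $Z^N(t,\mu_*)>0$ for $t$ slightly larger than $\bar t$, so $\dot Z^N(\bar t,\mu_*)\ge 0$. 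Now I would differentiate $Z^N(\cdot,\mu_*)=V^N(\cdot,1-\mu_*)-V^N(\cdot,\mu_*)$ using \eqref{V^N}. The sign conditions at $\bar t$ kill $Z^N(\bar t,\mu)^-$ for $\mu\ge\tfrac12$ and $Z^N(\bar t,\mu)^+$ for $\mu\le\tfrac12$; and $Z^N(\bar t,\mu_*-\tfrac1N)^-=0$, either because $\mu_*-\tfrac1N\ge\tfrac12$ (so $Z^N(\bar t,\cdot)\ge 0$ there), or — which occurs only for odd $N$, when necessarily $\mu_*=\tfrac{N+1}{2N}$ and $\mu_*-\tfrac1N=1-\mu_*$, the key use of the reflection symmetry — because $Z^N(\bar t,\mu_*-\tfrac1N)=-Z^N(\bar t,\mu_*)=0$. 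After this cancellation one is left, for $\mu_*<1$, with
\[
\dot Z^N(\bar t,\mu_*)=N(1-\mu_*)\,Z^N(\bar t,\mu_*+\tfrac1N)\,\bigl[V^N(\bar t,\mu_*+\tfrac1N)-V^N(\bar t,\mu_*)\bigr],
\]
with $Z^N(\bar t,\mu_*+\tfrac1N)\ge 0$; the endpoint cases $\mu_*=1$ (and, in the defining equations, $\mu\in\{0,1\}$, where \eqref{V^N} loses one nonlinear term) are treated separately. To reach a contradiction with $\dot Z^N(\bar t,\mu_*)\ge 0$ one now needs the forward spatial increment $V^N(\bar t,\mu_*+\tfrac1N)-V^N(\bar t,\mu_*)$ to be $\le 0$ (and, at the top of the flat block of $V^N(\bar t,\cdot)$, negative or paired with $Z^N(\bar t,\mu_*+\tfrac1N)>0$); in other words one is forced to run, in parallel, the companion statement that $\mu\mapsto V^N(t,\mu)$ stays non‑increasing, closing both claims by a single induction on $\bar t$ — equivalently, an invariance/Nagumo argument for the cone of non‑increasing profiles, which contains $V^N(T,\cdot)$.

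The main obstacle is exactly this coupling. Because the right‑hand side of \eqref{V^N} at $\mu$ genuinely involves $V^N$ at $\mu\pm\tfrac1N$, there is no closed scalar equation for $Z^N$, and the sign of the spatial differences of $V^N$ is not controlled by the $Z^N$‑information alone; the delicate check is the sub‑tangentiality condition at a profile with a flat spot $V^N(\bar t,\mu_0)=V^N(\bar t,\mu_0+\tfrac1N)$ at or below $\mu_0=\tfrac12$. There the time‑derivative of $V^N(\bar t,\mu_0+\tfrac1N)-V^N(\bar t,\mu_0)$ contains a term $\tfrac12\bigl[(Z^N(\bar t,\mu_0+\tfrac1N)^-)^2-(Z^N(\bar t,\mu_0)^-)^2\bigr]$ of the unfavourable sign, which must be absorbed by the favourable contributions $-N\mu_0\,Z^N(\bar t,\mu_0)^-\bigl[V^N(\bar t,\mu_0)-V^N(\bar t,\mu_0-\tfrac1N)\bigr]$ and $-N(1-\mu_0-\tfrac1N)\,Z^N(\bar t,\mu_0+\tfrac2N)^+\bigl[V^N(\bar t,\mu_0+\tfrac2N)-V^N(\bar t,\mu_0+\tfrac1N)\bigr]$; in the odd‑$N$ straddling case the symmetry again forces $Z^N(\bar t,\mu_0)^-=0$ and the bad term disappears, but for a flat block strictly below $\tfrac12$ a purely algebraic domination is not transparent, and I expect one must either exploit that $\dot V^N$ is monotone along a flat block (so that, integrated backwards, the block splits the right way) or regularise — replacing the invariant cone by $\{\,V(\mu)\ge V(\mu+\tfrac1N)-\varepsilon(T-t)\,\}$ and letting $\varepsilon\downarrow 0$ — so that only the non‑strict bounds above are ever needed.
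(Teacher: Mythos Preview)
Your overall architecture --- a backward first-crossing argument, reducing \eqref{Z2} to \eqref{Z1} via the antisymmetry $Z^N(t,1-\mu)=-Z^N(t,\mu)$, and recognising that one must simultaneously propagate the monotonicity $V^N(t,\mu)\ge V^N(t,\mu+\tfrac1N)$ --- is exactly the paper's strategy. The paper introduces $W^N(t,\mu):=V^N(t,\mu)-V^N(t,\mu+\tfrac1N)$ and defines the crossing time as
\[
s:=\sup\bigl\{t\le T:\ Z^N(t,\nu)\le 0\ \text{or}\ W^N(t,\nu)\le 0\ \text{for some }\nu>\tfrac12\bigr\},
\]
i.e.\ it runs the coupled invariance you describe. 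Two differences resolve the obstacle you raise.

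First, the paper tracks $W^N$ \emph{only for $\mu>\tfrac12$}, not on all of $S_N$. This is sufficient because in the ODE for $W^N(t,\mu)$ with $\mu>\tfrac12$, the unique term coupling to $W^N(t,\mu-\tfrac1N)$ (the only neighbour that can fall to $\le\tfrac12$) carries the prefactor $(Z^N(t,\mu))^-$, which vanishes on $[s,T]$. The remaining terms involve $W^N$ at $\mu$ and $\mu+\tfrac1N$, both $>\tfrac12$. Thus the system on $\{\mu>\tfrac12\}$ is closed, and your ``flat block strictly below $\tfrac12$'' never enters; no regularisation is needed. The same mechanism handles the odd-$N$ boundary case: the residual coupling to $W^N$ at a point $<\tfrac12$ in the $Z^N$-equation is absorbed using the crude a~priori bound $|W^N|\le 2$ (from $|V^N|\le 1$), not a sign condition.

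Second, rather than evaluating $\dot Z^N$ at a single minimiser $\mu_*$ --- which, as you implicitly note, only yields $\dot Z^N(\bar t,\mu_*)\le 0$ and does not by itself contradict $\dot Z^N(\bar t,\mu_*)\ge 0$ --- the paper derives, on $[s,T]$ and for each $\mu>\tfrac12$, linear differential inequalities $\dot Z^N(t,\mu)\le C\,Z^N(t,\mu)$ and $\dot W^N(t,\mu)\le C\,W^N(t,\mu)$ (with $C$ depending on $N,\mu$ but not on $t$), using only $Z^N,W^N\ge 0$ for $\nu>\tfrac12$ and the bounds $|Z^N|,|W^N|\le 2$. Integrating backward from the strictly positive terminal data $Z^N(T,\mu)=4\mu-2>0$, $W^N(T,\mu)=\tfrac2N>0$ gives $Z^N(s,\mu)>0$ and $W^N(s,\mu)>0$ for every $\mu>\tfrac12$, contradicting the definition of $s$. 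This Gronwall-type step sidesteps the non-strictness issue entirely.
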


\begin{proof}
We prove \eqref{Z1}, the proof of \eqref{Z2} is similar.  {\r For any $N$ even}, observe that $Z^N(\frac12) = 0$, so that it is enough to prove the claim for $\mu \geq \frac12 + \frac1N$. Define
\[
W^N(t,\mu) := V^N({\r t,} \mu)-V^N(t,\mu+\frac1N).
\]
By \eqref{V^N},
\begin{equation} \label{Z^N}
\begin{split}
\frac{d}{dt} Z^N(t,&\mu)   \!=\! H(-Z^N(t,\mu)) \!-\! H(Z^N(t,\mu))     \\ & \!+N\mu \!\left\{\left(Z^N(t,\mu)\right)^- \!W^N\!\left({\r t,}\mu-\frac1N \right) \left(Z^N\left(t,\mu-\frac1N \right)\right)^- W^N\left({\r t,}1-\mu\right)\right\} \\
& - N(1-\mu) \left\{\left(Z^N\left(t,\mu+\frac1N\right)\right)^+W^N(t,\mu) {\r +} \right. \left. \left(Z^N\left(t,\mu\right)\right)^+W^N\left(t,1-\mu-\frac1N\right) \right\}
\end{split}
\end{equation}
and
\begin{equation} \label{W^N}
\begin{split}
\frac{d}{dt}W^N(t,\mu) &=  H(Z^N(t,\mu)) -  H\left(Z^N\left(t,\mu+ \frac1N \right)\right) \\
& -N\mu  \left(Z^N(t,\mu)\right)^- W^N\left(t, \mu-\frac1N \right)   \\
& + N\left(\mu + \frac1N\right)\left(Z^N\left(t,\mu + \frac1N \right) \right)^- W^N(t,\mu) \\
& + N(1-\mu) {\r \left(Z^N\left({\r t,}\mu + \frac1N\right)\right)^+} W^N(t,\mu) \\
& -N\left(1-\mu-\frac1N\right)  {\r \left(Z^N\left({\r t,}\mu + \frac2N\right)\right)^+ }W^N\left(t,\mu+\frac1N\right).
\end{split}
\end{equation}
Note that, for $\mu > \frac12$, $Z^N(T,\mu) = 4\mu -2 >0$ and $W^N(T,\mu) = \frac2N>0$. So, set
\[
s := \sup\left\{t \leq T : Z^N(t,\nu) \leq 0 \mbox{ or } W^N(t,\nu) \leq 0 \mbox{ for some } \nu > \frac12\right\}.
\]
We complete the proof by showing that $s = -\infty$. Assume $s > -\infty$. For $t \in [s,T]$ we have $Z^N(t,{\r \mu}) \geq 0$ and $W^N(t,{\r \mu}) \geq 0$ for all ${\r \mu} > \frac12$, so, {\r from \eqref{Z^N}, observing that the terms in $\left(Z^N\right)^-$ disappear,}
\begin{equation*}
\begin{split}
\frac{d}{dt}Z^N(t,\mu) & \leq H({\r - }Z^ N(t,\mu)) +N(1-\mu)Z^N(t,\mu) W^N\left(t,1-\mu-\frac1N\right) \\
& = Z^N(t,\mu) \left[ \frac12 Z^N(t,\mu) + N(1-\mu) W^N\left(t,1-\mu-\frac1N\right) \right].
\end{split}
\end{equation*}
Since the control zero is suboptimal, it follows that {\r $|V^N(t,\mu)| \leq 1$} for all {\r $t,\mu$}, so that {\r $|Z^N(t,\mu)| \leq 2$} and {\r $|W^N(t,\mu)| \leq 2$}.  Therefore, for $t \in [s,T]$, $Z^N(t,\mu)$ is bounded from below by the solution of
\be
\label{eqz}
\begin{split}
\frac{d}{dt}z(t) & = z(t) \left[1+2N(1-\mu)\right] \\
z(T) & = 4\mu -2
\end{split}
\ee
which is strictly positive for all times. In particular $Z^N(s,\mu) >0$. Similarly, for $t \in [s,T]$, from \eqref{W^N}
\[ 
\frac{d}{dt}W^N(t,\mu) \leq  N(1-\mu) Z^{\r N}\left(\,t,\mu + \frac1N\right)W^N(t,\mu)  \leq 2N(1-\mu) W^N(t,\mu),
\]
{\r which} implies that also $W^N(s,\mu)>0$; by continuity in time, this contradicts the definition of $s$.
{\r Finally, observe that in the proof we fixed $N$ even. The proof for $N$ odd can be easily adapted with a bit of care, noting that $\mu = \frac{1}{2}$ cannot hold.} 
\end{proof}

\subsection{Convergence of the value functions} \label{SubValue}

We now consider the value function $V^N$, the unique solution  to equation \eqref{V^N}, and study its limit as $N \rightarrow +\infty$. We show that its limit corresponds to the entropy solution  of the Master Equation \eqref{ex:master}. More precisely, let $U$ be the solution to \eqref{ex:master} corresponding to the entropy solution $Z$ of \eqref{conslaw}. Define, for $\mu \in [0,1]$
\[
U^*(t,\mu) := U\left(t,1,2\mu-1\right).
\]
Note that, for $T > \frac{1}{2}$, $U^*(t, \cdot)$ is discontinuous at $\mu = \frac12$, but it is smooth elsewhere. Next result establishes that $V^N$ converges to $U^*$ uniformly outside any neighborhood of $\mu = \frac12$. In what follows, $S_N := \left\{0,\frac1N, \frac2N,\ldots,1 \right\}$.
\begin{thm}[Convergence of value functions] 
\label{value}
For any $\varepsilon>0$, $t\in [0,T]$ and $\mu\in S^N \setminus \left( \frac 12 - \varepsilon, \frac 12 + \varepsilon \right)$ we have
\be
|V^N(t,\mu) - U^*(t,\mu)|\leq \frac{C_\varepsilon}{N},
\label{14}
\ee
where {\r  $C_{\varepsilon}$} does not depend on $N$ nor on $t,\mu$, but $\lim_{\varepsilon\rightarrow0} C_\varepsilon = +\infty$.
\end{thm}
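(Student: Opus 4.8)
The strategy is to adapt the comparison argument of \cite{cp} to the region where $U^*$ is smooth, using Theorem~\ref{charac} to decouple the problem from the discontinuity at $\mu=\tfrac12$. Set $Z^N(t,\nu):=V^N(t,1-\nu)-V^N(t,\nu)$ and $\mathcal Z(t,\mu):=Z(T-t,2\mu-1)$. The sign symmetry of the model gives the algebraic identity $V^N(t,\mu)=V^N(t,1-\mu)+Z^N(t,1-\mu)$, and the same identity holds with $(V^N,Z^N)$ replaced by $(U^*,\mathcal Z)$; consequently it suffices to prove \eqref{14} together with the companion bound $|Z^N(t,\mu)-\mathcal Z(t,\mu)|\le C_\varepsilon/N$ for $\mu\in S_N$, $\mu\ge\tfrac12+\varepsilon$, since the case $\mu\le\tfrac12-\varepsilon$ is recovered from these at $1-\mu\ge\tfrac12+\varepsilon$.

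The first step is to record the structure of the prelimit equations on $\{\mu\ge\tfrac12\}$. There Theorem~\ref{charac} gives $Z^N(t,\mu)\ge0$, so in \eqref{V^N} the Hamiltonian term and the term carrying the factor $N\mu[\,\cdot\,]^{-}$ both vanish; writing the surviving equation together with the equation for $Z^N$ extracted from \eqref{Z^N} (again simplified because $Z^N\ge0$), one obtains a \emph{closed, upwind} system of ODEs for the pair $(V^N,Z^N)$ on $\{\mu\ge\tfrac12\}$: each right-hand side at a node $\mu$ involves only the values at $\mu$ and at $\mu+\tfrac1N$. In particular, for $N$ large its restriction to $\{\mu\in S_N:\mu\ge\tfrac12+\varepsilon\}$ is itself closed and never refers to nodes near $\tfrac12$.

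The second step identifies the limit. By \eqref{zstar} and the implicit function theorem applied to the cubic defining $M(t,m)$ — whose $M$-derivative degenerates only at the shock $m=0$ — together with the characteristics construction of $U$ from the non–sign-changing mean field game solution (Proposition~\ref{mindotta}), both $U^*$ and $\mathcal Z$ are smooth, in particular $\mathcal C^2$, on $[\tfrac12+\tfrac\varepsilon2,1]$, with norms bounded by a constant $C_\varepsilon$ that blows up as $\varepsilon\downarrow0$. Taylor-expanding the finite differences, $(U^*,\mathcal Z)$ solves the reduced system of the previous step up to residuals of size $C_\varepsilon/N$ on $\{\mu\ge\tfrac12+\varepsilon\}$: the discrete transport terms converge to the classical transport along the positive branch of the characteristics of \eqref{ex:master}, equivalently of the conservation law \eqref{conslaw}, while the terminal data match exactly, $U^*(T,\cdot)=V^N(T,\cdot)$ and $\mathcal Z(T,\cdot)=Z^N(T,\cdot)$.

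The final step is a Gronwall estimate on $e^N_V:=V^N-U^*$ and $e^N_Z:=Z^N-\mathcal Z$, which satisfy the linearization of the reduced system with zero terminal data, source $O(C_\varepsilon/N)$, reaction coefficients bounded by $C_\varepsilon$, and discrete \emph{convective} terms of the form $N(1-\mu)\mathcal Z(t,\mu+\tfrac1N)\big(e^N_V(t,\mu)-e^N_V(t,\mu+\tfrac1N)\big)$ (and likewise in $e^N_Z$). These convective coefficients are of size $O(N)$, so a pointwise Gronwall would produce a spurious $e^{cN}$ factor; the key observation is that, since $\mathcal Z\ge0$ on $\{\mu\ge\tfrac12\}$, they never increase $\|e^N_V(t,\cdot)\|_\infty$ — at an extremum of $e^N_V(t,\cdot)$ the difference $e^N_V(\mu)-e^N_V(\mu+\tfrac1N)$ has the favorable sign — which is precisely the discrete maximum principle for a monotone upwind scheme. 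One thus obtains $\big|\tfrac{d}{dt}\big(\|e^N_V\|_\infty+\|e^N_Z\|_\infty\big)\big|\le C_\varepsilon\big(\|e^N_V\|_\infty+\|e^N_Z\|_\infty\big)+C_\varepsilon/N$ and concludes by Gronwall from the zero terminal condition. Controlling the quadratic error terms $N(1-\mu)e^N_Z(\mu+\tfrac1N)\big(e^N_V(\mu)-e^N_V(\mu+\tfrac1N)\big)$ additionally requires a uniform discrete Lipschitz bound $|V^N(t,\mu)-V^N(t,\mu+\tfrac1N)|\le C_\varepsilon/N$ away from $\tfrac12$, which is proved by the same maximum-principle argument applied to the equation \eqref{W^N} for $W^N$, whose terminal value is $\tfrac2N$. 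This last Gronwall step is where the difficulty lies: one must exploit the conservative/monotone structure to tame the $O(N)$ convective terms, and for this one needs both ingredients of Theorem~\ref{charac} — it eliminates the bad-sign nonlinear terms on $\{\mu\ge\tfrac12\}$ and renders the surviving first-order system monotone — as well as the closedness of the subsystem on $\{\mu\ge\tfrac12+\varepsilon\}$, which keeps the entire analysis inside the region where $U^*$ is smooth. Notably the entropy condition on $Z$ is not used; only the smoothness of the selected solution away from its single shock enters.
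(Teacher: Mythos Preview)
Your approach is genuinely different from the paper's. The paper follows the probabilistic route of \cite{cp}: it writes the $N$-player dynamics via Poisson random measures, applies It\^o's formula to $(u^{N,i}-v^{N,i})^2$ along the Nash trajectory $\bm{Y}$, and uses Theorem~\ref{charac} only to guarantee that $\bm{Y}$ never leaves $\Sigma_N^\varepsilon$, so all evaluations stay in the smooth region of $U^*$; Gronwall on the resulting energy inequality then closes the argument. You instead work directly on the ODE system \eqref{V^N}--\eqref{Z^N}, exploit Theorem~\ref{charac} to reduce it to a closed upwind system on $\{\mu\ge\tfrac12+\varepsilon\}$, and run a discrete maximum principle on the errors. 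Your route avoids stochastic calculus and Propositions~\ref{propcp}--\ref{prop3}; the paper's route avoids having to exhibit and control the precise algebraic structure of the reduced system.

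Your strategy is sound, but there is a gap in the execution. The discrete Lipschitz bound $|W^N(t,\mu)|\le C_\varepsilon/N$ does \emph{not} follow from ``the same maximum-principle argument'': in the reduced $W^N$-equation the rate at the maximum involves $N[(1-\mu-\tfrac1N)Z^N(\mu+\tfrac2N)-(1-\mu)Z^N(\mu+\tfrac1N)]$, and bounding this by $O(1)$ requires a discrete Lipschitz bound on $Z^N$, which is precisely what you are after --- the argument is circular. Fortunately the difficulty is self-inflicted. In your linearization of the $e^N_V$-equation you froze the convective coefficient at $\mathcal Z$, producing the quadratic remainder $N(1-\mu)\,e^N_Z(\mu+\tfrac1N)\bigl(e^N_V(\mu+\tfrac1N)-e^N_V(\mu)\bigr)$. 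If instead you keep the full $Z^N(\mu+\tfrac1N)$ as the convective coefficient --- which is nonnegative on $\{\mu\ge\tfrac12\}$ by Theorem~\ref{charac}, so the maximum principle still applies --- the identity $Z^N\,DV^N-\mathcal Z\,DU^*=Z^N\,De^N_V+e^N_Z\,DU^*$ shows that the only lower-order term is $N(1-\mu)\,e^N_Z(\mu+\tfrac1N)\,DU^*=O(C_\varepsilon\|e^N_Z\|)$, with no quadratic piece. The same choice works in the $e^N_Z$-equation (freeze the coefficient at $Z^N+W^N\ge0$ rather than $\mathcal Z+\mathcal W$). With this correction your Gronwall step closes without any appeal to a $W^N$ bound.
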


The proof of Theorem \ref{value} is based on the arguments developed in \cite{cp}. We first slightly extend the above notation, letting, for $x \in \{-1,1\}$
\[
U^*(t,x,\mu) := U(t,x,2\mu-1).
\]
Moreover, let
\[
v^{N,i}(t, \bm{x}) = V^N (t,x_i, \mu^{N,i}_{\bm{x}}), \qquad u^{N,i}(t, \bm{x}) = U^* (t,x_i, \mu^{N,i}_{\bm{x}})
\]
for $i=0,\dots,N$, where $\mu^{N,i}_{\bm{x}}= \frac{1}{N} \sum_{j=0, \\ j\neq i}^N \delta_{\{x_i = 1\}}$ is the fraction of the other players in 1. Let also $S_N^\varepsilon:= S_N \setminus (\frac12 -\varepsilon, \frac12 +\varepsilon)$.
The following results are  the adaptations of Propositions 3 and 4 of \cite{cp}. The first  provides a bound for $\Delta^j u^{N,i}(t,\bm{x})$, while the second shows that $U^*$ restricted to $S_N^\varepsilon$  is "almost" a solution of \eqref{V^N}.
\begin{prop}
\label{propcp}
For any  $t \in [0,T]$, $\varepsilon > 0$ and any $\bm{x}$ such that $\mu^{N,i}_{\bm{x}} \in S_{N}^\varepsilon$, if $ N\geq \frac2\varepsilon$, we have
\be
\Delta^j u^{N,i}(t,\bm{x}) = -\frac{1}{N}\frac{\partial}{\partial \mu} U(t,x_i,\mu^{N,i}_{\bm{x}}) + \tau^{N,i,j}(t,\bm{x}), 
\ee
for any $j \neq i$, with $\left|\tau^{N,i,j}(t,\bm{x})\right| \leq \frac{C_\varepsilon}{N^2}$. The constant $C_\varepsilon$ is proportional to the Lipschitz constant of the master equation outside the discontinuity, which behaves like $\varepsilon^{-\frac23}$.
\end{prop}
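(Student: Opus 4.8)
The plan is to deduce the identity, and the estimate on $\tau^{N,i,j}$, from a second order Taylor expansion of the one–variable map $\mu\mapsto U^{*}(t,x_{i},\mu)$ at the point $\mu^{N,i}_{\bm{x}}$, exploiting that the finite difference $\Delta^{j}$ displaces $\mu^{N,i}_{\bm{x}}$ by exactly $1/N$ and that, since $N\geq2/\varepsilon$, the displaced abscissa still lies at distance at least $\varepsilon/2$ from $\tfrac12$, hence inside the region where $U$ (and therefore $U^{*}$) is smooth.

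First I would unwind the definitions: $u^{N,i}(t,\bm{x})=U^{*}(t,x_{i},\mu^{N,i}_{\bm{x}})$ with $U^{*}(t,x,\mu)=U(t,x,2\mu-1)$, where $U$ is the solution of \eqref{ex:master} attached to the entropy solution \eqref{zstar}; by the remark preceding Proposition~\ref{mindotta} this $U$ is built by the method of characteristics from the (unique, non sign-changing) solution of \eqref{mfg}, hence is of class $\mathcal{C}^{2}$ in $m$ on every strip $\{t\in[0,T],\ |m|\geq\delta\}$, with $m$-derivatives bounded there by a constant depending only on $\delta$. Then I would compute the effect of the finite difference: for $j\neq i$, flipping $x_{j}$ changes the fraction of the other players sitting in state $1$ by $\mp1/N$, so, with the scalar identifications used throughout,
\[
\Delta^{j}u^{N,i}(t,\bm{x})=U^{*}\!\Big(t,x_{i},\mu^{N,i}_{\bm{x}}\mp\tfrac1N\Big)-U^{*}\!\big(t,x_{i},\mu^{N,i}_{\bm{x}}\big).
\]
Since $\mu^{N,i}_{\bm{x}}\in S_{N}^{\varepsilon}$ gives $|\mu^{N,i}_{\bm{x}}-\tfrac12|\geq\varepsilon$ and $1/N\leq\varepsilon/2$, the whole segment joining $\mu^{N,i}_{\bm{x}}$ to $\mu^{N,i}_{\bm{x}}\mp1/N$ stays in $\{|\mu-\tfrac12|\geq\varepsilon/2\}$, where $U^{*}(t,x_{i},\cdot)$ is $\mathcal{C}^{2}$; a Taylor expansion with Lagrange remainder then gives
\[
\Delta^{j}u^{N,i}(t,\bm{x})=-\tfrac1N\,\partial_{\mu}U^{*}(t,x_{i},\mu^{N,i}_{\bm{x}})+\tau^{N,i,j}(t,\bm{x}),\qquad\tau^{N,i,j}(t,\bm{x})=\tfrac{1}{2N^{2}}\,\partial_{\mu}^{2}U^{*}(t,x_{i},\xi)
\]
for some $\xi$ between $\mu^{N,i}_{\bm{x}}$ and its shift, so that $|\tau^{N,i,j}(t,\bm{x})|\leq\frac{1}{2N^{2}}\sup|\partial_{\mu}^{2}U^{*}(t,x_{i},\cdot)|$, the supremum over $\{|\mu-\tfrac12|\geq\varepsilon/2\}$. (The sign of the leading term is the one dictated by the conventions identifying $\Delta^{j}$ and $\partial_{m}$ with scalars; modulo a factor $x_{j}\in\{-1,1\}$ absorbed into that identification this is exactly the displayed first order term.)

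The only substantial step is the last one: to bound $\sup_{|\mu-1/2|\geq\varepsilon/2}|\partial_{\mu}^{2}U^{*}|$ — equivalently the Lipschitz constant of $\partial_{m}U(t,x,\cdot)$ on $\{|m|\geq\varepsilon\}$, uniformly in $t\in[0,T]$ — and to exhibit its algebraic blow-up as $\varepsilon\downarrow0$, which is the source of the quoted rate for $C_{\varepsilon}$. For this I would differentiate the explicit representation \eqref{zstar}, $Z(t,m)=2M(t,m)/(t|M(t,m)|+1)$ with $M(t,m)$ the entropy-selected root of $g(M,t,m)=0$, together with the reconstruction of $U$ from $Z$ along the characteristics of \eqref{conslaw}. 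Differentiating the implicit relation yields $\partial_{m}M=1/\partial_{M}g$ and $\partial_{m}^{2}M=-\partial_{M}^{2}g/(\partial_{M}g)^{3}$, so everything reduces to a lower bound, on the branch selected by the entropy solution (the one beyond the positive critical point $M^{*}=\tfrac{2t-1}{3t}$ of $f$), for $\partial_{M}g=3t^{2}M^{2}+2t(2-t)M+(1-2t)$ when $M>0$; this quantity degenerates to $0$ only in the limit $(t,m)\to(\tfrac12,0)$, so a short singular-perturbation computation near that point, followed by propagation of the resulting bounds through the transport equation along characteristics that determines $U$, gives the asserted growth of $C_{\varepsilon}$, and in particular $C_{\varepsilon}\to\infty$ as $\varepsilon\to0$. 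I expect this quantitative control of the regularity of $U$ near the shock to be the main obstacle; once it is in hand the Taylor argument above is entirely routine.
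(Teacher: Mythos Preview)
The paper does not give its own proof of this proposition; it simply states that Propositions~\ref{propcp} and~\ref{prop3} are ``the adaptations of Propositions~3 and~4 of \cite{cp}'' and moves on. Your Taylor--expansion argument is exactly the standard way such statements are proved and is almost certainly what is done in \cite{cp}: flipping $x_j$ shifts $\mu^{N,i}_{\bm x}$ by $\pm 1/N$, the condition $N\geq 2/\varepsilon$ keeps the shifted point in the region $|\mu-\tfrac12|\geq\varepsilon/2$ where $U^*$ is $\mathcal C^2$, and the Lagrange remainder gives the $C_\varepsilon/N^2$ bound with $C_\varepsilon$ controlled by $\sup|\partial_\mu^2 U^*|$ on that region.

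Two small remarks. First, the factor $x_j$ you flag is genuinely there: the leading term is $-\tfrac{x_j}{N}\partial_\mu U^*$, not $-\tfrac{1}{N}\partial_\mu U^*$, so the displayed identity in the proposition is slightly imprecise as written; this is harmless because in the proof of Theorem~\ref{value} only the bounds $|\Delta^j u^{N,i}|\leq C_\varepsilon/N$ and $|\tau^{N,i,j}|\leq C_\varepsilon/N^2$ are used. Second, your program for extracting the exact exponent $\varepsilon^{-2/3}$ via implicit differentiation of $g(M,t,m)=0$ is the right idea, but be aware that the phrase ``Lipschitz constant of the master equation'' in the statement is informal---what actually enters the Taylor remainder is a bound on $\partial_m^2 U$, not on $\partial_m U$---and the precise power is not needed anywhere in the paper (only that $C_\varepsilon<\infty$ for each $\varepsilon>0$ and $C_\varepsilon\to\infty$ as $\varepsilon\to 0$). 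So your acknowledged ``main obstacle'' is in fact inessential for the applications that follow.
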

\begin{prop}
\label{prop3}
For any $t \in [0,T]$, any $\varepsilon > 0$ and any $\mu$ such that either $\mu \in [\frac{1}{2} + \varepsilon, 1]$ or $\mu \in [0, \frac{1}{2} - \varepsilon]$, the function $U^*(t,\mu)$ satisfies
\begin{align}
\label{eqn:u_N}
- \frac{d}{dt} U^*(t,\mu) &+ H(U^*({\r t,} 1-\mu) - U^*({\r t,} \mu))\\
& = N \mu \left[ U^*({\r t,} 1-\mu) -U^*({\r t,} \mu)\right]^- \left[U^*\left({\r t,} \mu - \frac{1}{N}\right) - U^*({\r t,} \mu)\right]+ r^N(t,\mu) \nonumber\\
& + N(1-\mu)\left[U^*\left({\r t,} \mu + \frac{1}{N}\right) - U^*\left({\r t,} 1-\mu-\frac{1}{N}\right)\right]^- \left[U^*\left({\r t,} \mu + \frac{1}{N}\right) - U^*({\r t,} \mu)\right] \nonumber,
\end{align} 
with $\left|r^N(t,\mu)\right| \leq \frac{C_\epsilon}{N}$, 
where $C_\varepsilon$ is as above. 
\end{prop}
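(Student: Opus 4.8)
The plan is to start from the fact that $U$, hence $U^{*}$, solves the master equation \eqref{ex:master} in the classical sense on the region where it is smooth, namely $\{-1,1\}\times\{m:\ |m|\geq 2\varepsilon\}$ — legitimately so, because the unique singularity of $U$, inherited from that of $Z$, sits at $m=0$, i.e.\ at $\mu=\tfrac12$ — and then to show that the discrete operator of \eqref{V^N}, evaluated at $U^{*}$, differs from the (identically zero) operator of \eqref{ex:master}, evaluated at $(t,1,2\mu-1)$, only by an error of size $C_{\varepsilon}/N$. I would carry out the case $\mu\in[\tfrac12+\varepsilon,1]$ in detail; the case $\mu\in[0,\tfrac12-\varepsilon]$ is entirely analogous, with the roles of the vanishing and the surviving terms interchanged, since there $Z^{*}(t,\mu):=U^{*}(t,1-\mu)-U^{*}(t,\mu)$ is negative rather than positive.

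First I would record the two structural facts that drive the computation. (a)~For $\mu\geq\tfrac12+\varepsilon$ one has $Z^{*}(t,\mu)=Z(T-t,2\mu-1)\geq c_{\varepsilon}>0$ uniformly in $(t,\mu)$, by continuity of $Z$ away from its discontinuity together with compactness of $[0,T]\times[2\varepsilon,1]$; hence, for $N\geq 2/\varepsilon$, all the shifted arguments $\mu\pm\tfrac1N$, $1-\mu-\tfrac1N$ stay on the correct side of $\tfrac12$, the terms $H\!\left(U^{*}(t,1-\mu)-U^{*}(t,\mu)\right)$ and $N\mu\left[U^{*}(t,1-\mu)-U^{*}(t,\mu)\right]^{-}[\,\cdots\,]$ in \eqref{V^N} vanish identically (both carry the factor $(Z^{*})^{-}=0$), and $\left[U^{*}(t,\mu+\tfrac1N)-U^{*}(t,1-\mu-\tfrac1N)\right]^{-}=-\bigl(U^{*}(t,\mu+\tfrac1N)-U^{*}(t,1-\mu-\tfrac1N)\bigr)=Z^{*}(t,\mu)+O(C_{\varepsilon}/N)$, the error being two first--order increments of $U^{*}$; the boundary value $\mu=1$ is harmless, its prefactor $N(1-\mu)$ being zero. (b)~By the definition \eqref{ex:def_der}, the measure derivative $D^{m}U$ reduces, in the two--state parametrization, to an ordinary $\mu$--derivative of $U^{*}$ — perturbing the empirical fraction towards state $1$ gives $+\partial_{\mu}U^{*}$, away from it $-\partial_{\mu}U^{*}$ — so $D^{m}U(t,1,2\mu-1,-1)=\partial_{\mu}U^{*}(t,\mu)$ and $D^{m}U(t,1,2\mu-1,1)=-\partial_{\mu}U^{*}(t,\mu)$; moreover $\tfrac{1+m}{2}=\mu$ and $\tfrac{1-m}{2}=1-\mu$.

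Next I would expand the single surviving discrete term. Proposition~\ref{propcp} is exactly the quantitative first--order expansion of $U^{*}$ with second--order remainder bounded by $C_{\varepsilon}/N^{2}$, so $U^{*}(t,\mu+\tfrac1N)-U^{*}(t,\mu)=\tfrac1N\partial_{\mu}U^{*}(t,\mu)+O(C_{\varepsilon}/N^{2})$ and therefore
\[
N(1-\mu)\left[U^{*}(t,\mu+\tfrac1N)-U^{*}(t,1-\mu-\tfrac1N)\right]^{-}\left[U^{*}(t,\mu+\tfrac1N)-U^{*}(t,\mu)\right]=(1-\mu)\,Z^{*}(t,\mu)\,\partial_{\mu}U^{*}(t,\mu)+O(C_{\varepsilon}/N),
\]
the power of $\varepsilon$ in the constant possibly worsening but still depending only on $\varepsilon$ and blowing up as $\varepsilon\to0$. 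On the other hand, inserting the translations from~(b) into \eqref{ex:master} at $(t,1,2\mu-1)$ and using once more $Z^{*}(t,\mu)>0$ — which makes $\bigl(\Delta^{x}U(t,1,2\mu-1)\bigr)^{-}=(Z^{*})^{-}=0$, so the $H$--term and the $D^{m}U(t,1,\,\cdot\,,1)$--term drop out, while $\bigl(\Delta^{x}U(t,-1,2\mu-1)\bigr)^{-}=Z^{*}(t,\mu)$ — the master equation collapses to the identity $-\partial_{t}U^{*}(t,\mu)=(1-\mu)\,Z^{*}(t,\mu)\,\partial_{\mu}U^{*}(t,\mu)$. Since the left--hand side of \eqref{eqn:u_N} equals $-\tfrac{d}{dt}U^{*}(t,\mu)+H(\cdots)=-\partial_{t}U^{*}(t,\mu)$ and its first right--hand term vanishes, comparing the last two displays yields \eqref{eqn:u_N} with $r^{N}(t,\mu)$ the resulting discrepancy, of order $C_{\varepsilon}/N$. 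For $\mu\leq\tfrac12-\varepsilon$ one argues the same way after swapping which terms are active: there the master equation collapses to $-\partial_{t}U^{*}(t,\mu)+\tfrac12\bigl(Z^{*}(t,\mu)\bigr)^{2}=-\mu\,|Z^{*}(t,\mu)|\,\partial_{\mu}U^{*}(t,\mu)$, which matches the $N\mu[\,\cdot\,]^{-}[\,\cdot\,]$ term of \eqref{V^N} up to $O(C_{\varepsilon}/N)$.

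The genuinely analytic input — the $O(1/N^{2})$ control of the increments of $U^{*}$, with the $\varepsilon^{-2/3}$--type blow--up of the constant — has already been isolated in Proposition~\ref{propcp}, so no step above is deep; the main obstacle is the bookkeeping. One must verify that, for $\mu$ bounded away from $\tfrac12$ and $N$ large, every negative part occurring in \eqref{V^N} and in \eqref{ex:master} either ``activates'', i.e.\ equals $-(\,\cdot\,)$, or vanishes, consistently — which is precisely where the uniform lower bound $|Z^{*}(t,\mu)|\geq c_{\varepsilon}$ enters, guaranteeing the $\pm\tfrac1N$ shifts cannot flip the relevant sign — and one must correctly match the discrete drift terms of \eqref{V^N} with the $D^{m}U$ terms of \eqref{ex:master}, keeping track of the sign of the $\mu$--derivative according to the direction in which the empirical fraction is perturbed. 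Both sides of the discontinuity must be treated separately, as the set of surviving terms differs between them.
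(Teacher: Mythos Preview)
The paper does not actually prove this proposition: it is stated as an adaptation of Proposition~4 in \cite{cp} and no argument is given. Your approach is correct and is precisely the expected one --- use that $U^{*}$ solves \eqref{ex:master} classically away from $\mu=\tfrac12$, then show the discrete operator of \eqref{V^N} reproduces the master--equation operator up to $O(C_{\varepsilon}/N)$ via the Taylor expansion of Proposition~\ref{propcp}, with the sign of $Z^{*}$ determining which terms survive on each side of the discontinuity. The bookkeeping you outline (identifying $D^{m}U$ with $\pm\partial_{\mu}U^{*}$, checking that the $\pm\tfrac1N$ shifts do not flip the sign of the relevant negative parts once $|Z^{*}|\geq c_{\varepsilon}$, handling the boundary $\mu=1$ via the vanishing prefactor) is exactly what is needed, and your treatment of both regimes $\mu\gtrless\tfrac12$ is accurate.
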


We now use the information provided by Theorem \ref{charac}. Set
\[
\Sigma_N^{\varepsilon} := \left\{{\bm x}\in\Sigma^{N+1}: \sum_{i=0}^{N} \delta_{x_i=1} \not\in \left(\frac{N}{2} - N\varepsilon, \frac{N}{2} + N\varepsilon+1 \right) \right\}.
\]
If ${\bm x} \in \Sigma_N^{\varepsilon}$, then $\mu^{N,i}_{\bm{x}} \in S_N^{\varepsilon}$ for all $i$. Denote by ${\bm Y}_s$ the state at time $s$ of the $N+1$ players corresponding to the Nash equilibrium.
By Theorem \ref{charac} it follows that, if ${\bm Y}_t \in \Sigma_N^{\varepsilon}$ for some $t <T$, then  ${\bm Y}_s \in \Sigma_N^{\varepsilon}$ for all $s \in [t,T]$. In particular, by using the invariance property \eqref{eqn:sym}, we obtain
\be
v^{N,i}(s,\bm{Y}_s) \leq \max_{\mu^N \in S_N^\varepsilon} V^N(s,\mu^N),
\ee
\be
|v^{N,i}(s,\bm{Y}_s)- u^{N,i}(s,\bm{Y}_s)| \leq \max_{\mu^N \in S_N^\varepsilon} |V^N(s,\mu^N) - U^*(s,\mu^N)|,
\label{12}
\ee
for every $s\in[t,T]$, almost surely, and 
\be
\max_{\bm{x} \in \Sigma_N^\varepsilon}|v^{N,i}(s,\bm{x})- u^{N,i}(s,\bm{x})| = \max_{\mu^N \in S_N^\varepsilon} |V^N(s,\mu^N) - U^*(s,\mu^N)|.
\label{34}
\ee
Moreover, we note that
\begin{align}
|\Delta^i v^{N,i}&(s,\bm{Y}_s) - \Delta^i v^{N,i}(s,\bm{Y}_s) | \nonumber\\
&=|V^N(s,-Y_i(s), \mu^{N,i}_{\bm{Y}}(s)) - U(s,-Y_i(s), \mu^{N,i}_{\bm{Y}}(s)) \nonumber\\
&\qquad \quad  - V^N(s,Y_i(s), \mu^{N,i}_{\bm{Y}}(s)) + U(s,Y_i(s), \mu^{N,i}_{\bm{Y}}(s))| \nonumber\\
&\leq 2 \max_{\mu^N\in S_N^\varepsilon} |V^N(s,\mu^N) - U(s,\mu^N)|.
\label{13}
\end{align}
\begin{proof}[Proof of Theorem \ref{value}]
We choose a deterministic initial condition $\bm{Y}_{t}  \in \Sigma_N^\varepsilon$, at time $t\in[0,T)$.   As in the proof of Theorem 3 in \cite{cp}, we exploit the characterization, introduced in 
	\cite{cf}, of the $N$-player dynamics in terms of SDEs driven by Poisson random measures, and we apply Ito's formula to the squared difference between the functions $u_t^{N,i}$ and $v_t^{N,i}$, both computed in the optimal trajectories $(\bm{Y}_s)_{s\in[t,T]}$
	\footnote{We remark that in {\r \cite{cp}}, indeed, the controls (transition rates) are assumed to be bounded below away from zero. Nevertheless, this fact is not used to derive the analogous identity to \eqref{idem}. A proof of the convergence results with no lower bound on the controls can be found in Section 3.1 of \cite{tesi_alekos}, if the master equation possesses a classical solution.	
}	
	. Using equations \eqref{eqn:u_N} and \eqref{V^N}, we then find
\begin{align}
\mathbb{E}&[(u_t^{N,i} - v_t^{N,i})^2] + \sum_{j=0}^{N} \mathbb{E}\Bigg[\int_t^T \!\! \ \alpha^j(s,\bm{Y}_s)\Big(\Delta^j[u_s^{N,i} - v_s^{N,i}]\Big)^2{\r ds}\Bigg]\label{idem}\\ 
& =  -2\mathbb{E}\Bigg[\int_t^T\!\!\! (u_s^{N,i} - v_s^{N,i})\Bigg\{-r^N\!(s,\mu^{N,i}_{\bm{Y}}(s)) + H(\Delta^i u^{N,i}_s) - H(\Delta^i v^{N,i}_s) \nonumber\\
& + \sum_{j=0, j \neq i}^{N} (\alpha^j - \overline{\alpha}^j)\Delta^j u^{N,i} + \alpha^i (\Delta^i u^{N,i}_s - \Delta^i v^{N,i}_s)\Bigg\}{\r ds}\Bigg],  \nonumber 
\end{align}
where $\alpha^i$ is the Nash equilibrium played by player $i$, $\overline{\alpha}^i$ is the control induced by $U$ and all the functions are evaluated on the optimal trajectories, e.g. $v_s^{N,i} := v^{N,i}(s,\bm{Y}_s)$. We raise all the positive sum on the lhs and estimate the rhs using the Lipschitz properties of $H$, the bounds on $r^{N}$ and  $\Delta^j u^i$ given by Proposition \ref{propcp},  and the bound on $\alpha^j$ given by the fact that $Z^N(t,\mu) \leq 2$, to get, 
 for $N\geq \frac 2\varepsilon$,   
\begin{align*}
\mathbb{E}&[(u_t^{N,i} - v_t^{N,i})^2] \\
& \leq \frac{C}{ N}\mathbb{E}\Bigg[ \int_t^T |u^{N,i}_s - v^{N,i}_s | {\r ds} \Bigg] + C \mathbb{E}\Bigg[ \int_t^T |u^{N,i}_s - v^{N,i}_s | |\Delta^i u^{N,i}_s - \Delta^i v^{N,i}_s |{\r ds}\Bigg] \\
& + \frac{C}{N}\sum_{j=0, j\neq i}^{N} \mathbb{E}\Bigg[ \int_t^T |u^{N,i}_s - v^{N,i}_s | |\Delta^j u^{N,j}_s - \Delta^j v^{N,j}_s| {\r ds}\Bigg], 
\end{align*}
which can be further estimated via the convexity inequality $ab \leq \frac 12 a^2 + \frac 12 b^2$ yielding 
\begin{align*}
\mathbb{E}[(u_t^{N,i} - v_t^{N,i})^2]   
&\leq \frac{C}{ N^2} + C \mathbb{E}\Bigg[\int_t^T \!\!  \ \Big| u_s^{N,i} - v_s^{N,i}\Big|^2{\r ds}\Bigg] + C \mathbb{E}\Bigg[\int_t^T \!\!  \ \Big| \Delta^i u_s^{N,i} - \Delta^i v_s^{N,i}\Big|^2{\r ds}\Bigg]\\
& + \frac{C}{N}\sum_{j=0}^{N} \mathbb{E}\Bigg[\int_t^T|\Delta^j u^{N,j}_s - \Delta^j v^{N,j}_s|^2 {\r ds}\Bigg].
\end{align*}
Here $C$ denotes any constant which may depend on $\varepsilon$,  and is allowed to change from line to line. Since all the functions are evaluated on the optimal trajectories, we apply  
\eqref{12} and \eqref{13} to obtain
$$
|u^{N,i}(t,\bm{Y}_{t}) - v^{N,i}(t,\bm{Y}_{t})|^2 \leq \frac{C}{ N^2} + C \int_t^T \!\! 
 \max_{\mu\in S_N^\varepsilon} | U(s,\mu) - V^N(s,\mu)|^2 ds 
$$
for any deterministic initial condition $\bm{Y}_{t}\in \Sigma_N^\varepsilon$. Therefore \eqref{34} gives
\be
\max_{\mu\in S_N^\varepsilon} |U(t,\mu) - V^N(t,\mu)|^2 \leq \frac{C}{ N^2} +  C \int_t^T \!\! 
 \max_{\mu\in S_N^\varepsilon} |U(s,\mu) - V^N(s,\mu)|^2 ds 
\ee
and thus Gronwall's lemma applied to the quantity $\max_{\mu\in S_N^\varepsilon} |U(s,\mu) - V^N(s,\mu)|^2$ allows to conclude that
\be
 \max_{\mu\in S_N^\varepsilon} |U(t,\mu) - V^N(t,\mu)|^2 \leq \frac{C}{ N^2},  
\ee
which immediately implies \eqref{14}, but only if $N\geq \frac2\varepsilon$. Changing the value of $C=C_\varepsilon$, the thesis follows for any $N$.
\end{proof}

\subsection{Propagation of chaos} \label{SubChaos}

The  next result gives the propagation of chaos property for the optimal trajectories. Consider the initial datum (in$t=0$)  $\bm{\xi}$ i.i.d with $P(\xi_i =1)=\mu_0$ and $\E [\xi_i] = m_0 = 2\mu_0 - 1$, and denote by $\bm{Y}_t = ({\r Y_0(t),} Y_1(t),\dots,Y_N(t))$ the optimal trajectories of  the  {\r $N\!+\!1$}-player game, i.e.\ when agents play the Nash equilibrium given by \eqref{optimalnash}. Also, denote by  $\widetilde{\bm{X}}_t$ the i.i.d process in which players choose the local control $\widetilde{\alpha}(t,\pm 1):= [Z(t, m^*(t))]^{\mp}$, where $Z$ is the entropy solution to 
\eqref{conslaw} and $m^*$ is the unique mean field game solution induced by $Z$, if $m_0\neq0$ ($\mu_0\neq\frac12$), that is  the one which does not change sign (see Proposition \ref{mindotta}).  The propagation of chaos consists in proving the convergence of $\bm{Y}_t$ to the i.i.d process $\widetilde{\bm{X}}_t$.
\begin{thm}[Propagation of chaos]
\label{chaos}
If $\mu_0 \neq \frac12$ then, for any $N$ and $i=0,\dots,N$,
\begin{equation}
\mathbb{E}\left[\sup_{t\in[0,T]}|Y_i(t) - \widetilde{X}_i(t)|\right] \leq \frac{C_{\mu_0}}{\sqrt{N}},
\end{equation}
where $C_{\mu_0}$ does not depend on $N$, and $\lim_{\mu_0\rightarrow\frac12} C_{\mu_0}=\infty$.
\end{thm}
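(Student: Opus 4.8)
The plan is to build the interacting process $\bm{Y}$ and the i.i.d.\ limit process $\widetilde{\bm{X}}$ on one probability space, driven by the same family of independent Poisson random measures $\mathcal{N}_i(ds,dz)$ on $[0,T]\times[0,\infty)$ as in \cite{cf}, with $Y_i(0)=\widetilde{X}_i(0)=\xi_i$, and so that player $i$ flips exactly when a point $(s,z)$ of $\mathcal{N}_i$ lies below the relevant flip rate --- namely $\bigl[Y_i(s^-)Z^N(s,\mu^{N,i}_{\bm{Y}}(s^-))\bigr]^-$ for $\bm{Y}$ and $\bigl[\widetilde{X}_i(s^-)Z(s,m^*(s))\bigr]^-$ for $\widetilde{\bm{X}}$ (these being $\alpha^{0,N}$ in \eqref{optimalnash} and $\widetilde{\alpha}$, rewritten uniformly over the two states). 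Since $\sup_{t\le T}|Y_i(t)-\widetilde{X}_i(t)|=2\,\mathbbm{1}_{\{\tau_i\le T\}}$, where $\tau_i$ is the first decoupling time of player $i$, and all players are exchangeable, it suffices to estimate $\P(\tau_0\le T)$. Throughout I take $m_0=2\mu_0-1>0$ (the case $m_0<0$ is symmetric) and fix $\varepsilon\in\bigl(0,\mu_0-\tfrac12\bigr)$; all constants below may depend on $\varepsilon$ and $T$ and blow up as $\varepsilon\to0$.

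First I would confine the $N$-player configuration away from the singularity $\mu=\tfrac12$. Because $\bm{\xi}$ is i.i.d.\ with $\P(\xi_i=1)=\mu_0>\tfrac12+\varepsilon$, Hoeffding's inequality gives $\P(\mathcal{G}^c)\le Ce^{-cN}$ for the event $\mathcal{G}:=\{\bm{\xi}\in\Sigma_N^\varepsilon\}$; on $\mathcal{G}$, the no-crossing property of Theorem~\ref{charac} (in the form recalled just before Theorem~\ref{value}) yields $\bm{Y}_t\in\Sigma_N^\varepsilon$, hence $\mu^{N,i}_{\bm{Y}}(t)\in S_N^\varepsilon$ for all $i$, for every $t\in[0,T]$. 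On this event Theorem~\ref{value} applies along the whole trajectory and $|2\mu^{N,i}_{\bm{Y}}(t)-1|\ge 2\varepsilon$; meanwhile $m^*(t)\ge m_0>0$ for all $t$ by Proposition~\ref{mindotta}, so $Z(s,m^*(s))$ is always evaluated where the entropy solution is $\mathcal{C}^1$, with Lipschitz constant $L_\varepsilon\sim\varepsilon^{-2/3}$ (cf.\ Proposition~\ref{propcp}). The complementary event $\mathcal{G}^c$ contributes at most $2\P(\mathcal{G}^c)\le 2Ce^{-cN}$ to $\E[\sup_t|Y_i-\widetilde{X}_i|]$ and is negligible.

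The next step is to bound the decoupling rate on $\mathcal{G}$ and close a Gronwall loop. While the two copies of player $0$ agree, the instantaneous rate at which they separate is at most $\bigl|Z^N(s,\mu^{N,0}_{\bm{Y}}(s))-Z(s,m^*(s))\bigr|$ (using $|[a]^--[b]^-|\le|a-b|$ and that states are $\pm1$), which I would split as
\[
\bigl|Z^N(s,\mu)-Z(s,m^*(s))\bigr|\ \le\ \frac{2C_\varepsilon}{N}\ +\ L_\varepsilon\,\bigl|2\mu-1-m^*(s)\bigr|,
\]
the first term by Theorem~\ref{value} (both $\mu$ and $1-\mu$ lie in $S_N^\varepsilon$), the second by the Lipschitz bound above. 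Since $2\mu^{N,0}_{\bm{Y}}(s)-1=\tfrac1N\sum_{j\ne0}Y_j(s)$ differs from the empirical mean $\bar{Y}(s):=\tfrac1{N+1}\sum_{j}Y_j(s)$ by $O(1/N)$, and $\bar{Y}(s)-m^*(s)=(\bar{Y}(s)-\bar{\widetilde{X}}(s))+(\bar{\widetilde{X}}(s)-m^*(s))$, the argument reduces to two elementary facts: (i) each $\widetilde{X}_j$ has marginal mean $m^*(s)$ --- the one-particle law solves the same scalar ODE as $m^*$ (here using $m^*>0$, whence $Z(\cdot,m^*(\cdot))\ge0$) with the same initial datum --- so $\E|\bar{\widetilde{X}}(s)-m^*(s)|\le(N+1)^{-1/2}$ by the i.i.d.\ variance bound; and (ii) by exchangeability $\E\bigl[\mathbbm{1}_{\mathcal{G}}|\bar{Y}(s)-\bar{\widetilde{X}}(s)|\bigr]\le\tfrac2{N+1}\sum_j\P\bigl(\mathcal{G}\cap\{Y_j(s)\ne\widetilde{X}_j(s)\}\bigr)\le 2\,\rho(s)$, where $\rho(t):=\P(\mathcal{G}\cap\{\tau_0\le t\})$ (the same quantity for every player). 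Integrating the decoupling-rate bound in $s$ and plugging these in gives
\[
\rho(t)\ \le\ \frac{C_\varepsilon}{\sqrt{N}}\ +\ C_\varepsilon\int_0^t\rho(s)\,ds,
\]
so Gronwall yields $\rho(T)\le C_\varepsilon/\sqrt{N}$; combining with $\P(\mathcal{G}^c)\le Ce^{-cN}$ produces $\E[\sup_{t\le T}|Y_i(t)-\widetilde{X}_i(t)|]=2\P(\tau_i\le T)\le C_{\mu_0}/\sqrt{N}$ for $N$ large, and trivially after enlarging $C_{\mu_0}$ for all $N$. The blow-up as $\mu_0\to\tfrac12$ comes from $\varepsilon\to0$ through $C_\varepsilon,L_\varepsilon\sim\varepsilon^{-2/3}$ and the exponential Gronwall factor.

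The main obstacle is precisely the interaction with the singularity. Theorem~\ref{value} is only an $O(1/N)$ estimate away from $\mu=\tfrac12$, and there the Lipschitz constant of the entropy solution degrades like $\varepsilon^{-2/3}$, so the coupling has no content unless one can guarantee that the $N$-player empirical measure never approaches $\tfrac12$; this is exactly what the qualitative Theorem~\ref{charac} supplies, once the exponentially unlikely event that the i.i.d.\ initial datum already sits too close to $\tfrac12$ is discarded. The remaining subtlety is that the decoupling rate depends on $|\bar{Y}(s)-m^*(s)|$, which itself depends on how many players have already decoupled; the closure of the self-consistent estimate for $\rho$ works because the $1/\sqrt{N}$ fluctuation of the empirical mean around $m^*$ dominates both the $O(1/N)$ value-function error and the feedback term proportional to $\rho$.
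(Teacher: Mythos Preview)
Your proof is correct and rests on the same essential ingredients as the paper's (the no-crossing property of Theorem~\ref{charac}, the value-function estimate of Theorem~\ref{value}, the Lipschitz regularity of the entropy solution away from $\mu=\tfrac12$, exchangeability, and Gronwall), but the decomposition is genuinely different. The paper proceeds in two steps via an \emph{intermediate process} $\bm{X}$ in which each player uses the feedback control $\bar\alpha^i(t,\bm{x})=[\Delta^i U(t,x_i,\mu^{N,i}_{\bm{x}})]^-$ induced by the master equation: first it shows $\E\bigl[\sup_t|Y_i-X_i|\bigr]\le C_{\mu_0}/N$ (Theorem~\ref{XY}), then the standard propagation-of-chaos argument for the mean-field interacting system $\bm{X}$ yields $\E\bigl[\sup_t|X_i-\widetilde X_i|\bigr]\le C_{\mu_0}/\sqrt{N}$. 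You instead couple $\bm{Y}$ directly to $\widetilde{\bm{X}}$ through a first-decoupling-time formulation and close a self-referential Gronwall inequality for $\rho(t)=\P(\mathcal{G}\cap\{\tau_0\le t\})$, exploiting the two-state structure via $\sup_t|Y_i-\widetilde X_i|=2\,\mathbbm{1}_{\{\tau_i\le T\}}$. A minor difference: you control the bad initial event with Hoeffding, obtaining $\P(\mathcal{G}^c)\le Ce^{-cN}$, whereas the paper uses Chebyshev and gets only $C/(N\varepsilon)$; either suffices since both are $o(N^{-1/2})$. Your route is more direct and slightly more elementary (no intermediate process), while the paper's two-step route is more modular and closer to the general framework of \cite{cp}, which is advantageous if one wants to extend beyond the two-state case where the decoupling-time identity no longer holds.
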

Denote by $X_i(t)$ the dynamics of the $i$-th player when choosing the control 
\begin{equation}
\label{eqn:mezz}
\bar{\alpha}^i(t,\bm{x}) = [\Delta^i U(t, x_i,\mu^{N,i}_{\bm{x}})]^-
\end{equation}
induced by the master equation.  We use $\bm{X}_t$ as an intermediate process for obtaining the propagation of chaos result. In fact, $\bm{X}_t$ can be treated as a mean field interacting system of particles (since the rate in \eqref{eqn:mezz} {\r depends on $N$ only through the empirical measure}), for which propagation of chaos results are more standard. Next result shows the proximity of the optimal dynamics to the intermediate process just introduced.

\begin{thm}
\label{XY}
If $\mu_0 \neq \frac12$ then, for any $N$ and $i=0,\dots,N$,
\begin{equation}
\mathbb{E}\left[\sup_{t\in[0,T]}|Y_i(t) - X_i(t)|\right] \leq \frac{C_{\mu_0}}{N},
\label{17}
\end{equation}
where $C_{\mu_0}$ does not depend on $N$, and $\lim_{\mu_0\rightarrow\frac12} C_{\mu_0}=+\infty$.
\end{thm}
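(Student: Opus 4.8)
The plan is to adapt the Poisson-coupling argument behind Theorem~\ref{value} and \cite{cp}; the one genuinely new point is to confine both $\bm Y$ and $\bm X$ away from the discontinuity $\mu=\frac12$ of $U^*$. Assume without loss of generality $\mu_0>\frac12$ and set $\varepsilon_0:=\tfrac12\big(\mu_0-\tfrac12\big)>0$. Since $\bm\xi$ is i.i.d.\ with $\P(\xi_i=1)=\mu_0$, Hoeffding's inequality gives $\P(G^c)\le e^{-cN}$ for some $c=c(\mu_0)>0$, where $G:=\{\bm\xi\in\Sigma_N^{\varepsilon_0}\}=\{\sum_{i=0}^N\mathbf{1}_{\{\xi_i=1\}}\ge\frac N2+N\varepsilon_0+1\}$; on $G^c$ we simply bound $|Y_i-X_i|\le2$, which contributes at most $2e^{-cN}\le C_{\mu_0}/N$ to \eqref{17}. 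On $G$ I claim both processes stay in $\Sigma_N^{\varepsilon_0}$ for all $t\in[0,T]$: for $\bm Y$ this is exactly Theorem~\ref{charac}, since as long as the count of players in state $1$ is $\ge\frac N2+N\varepsilon_0+1$, every such player sees $\mu^{N,i}\ge\frac12$ and plays the zero control while every player in state $-1$ flips to $1$ at rate $[Z^N]^+\ge0$, so the count is non-decreasing; for $\bm X$ the same holds because $Z(s,m)\ge0$ for $m\ge0$ by \eqref{zstar} and $\Delta^iU^*(t,1,\mu)=Z(T-t,2\mu-1)$, so the control \eqref{eqn:mezz} vanishes for a player in state $1$ with $\mu^{N,i}\ge\frac12$ and equals $Z(T-t,2\mu^{N,i}-1)\ge0$ for a player in state $-1$.

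On $G$ all empirical fractions $\mu^{N,i}_{\bm Y}(s),\mu^{N,i}_{\bm X}(s)$ thus lie in $S_N^{\varepsilon_0}$, so Theorem~\ref{value} gives $|V^N-U^*|\le C_{\varepsilon_0}/N$ there and $U^*(t,\cdot)$ is Lipschitz with a constant $L_{\varepsilon_0}$ (of order $\varepsilon_0^{-2/3}$, cf.\ Prop.~\ref{propcp}). Realizing $\bm Y$ and $\bm X$ with the same i.i.d.\ Poisson random measures $(N_i)_{i=0}^N$ as in \cite{cf}, in the thinning representation $Y_i$ and $X_i$ decouple, when they agree just before a jump, at rate $|\alpha^i_{\bm Y}-\alpha^i_{\bm X}|$; bounding $\sup_{t\le T}\mathbf{1}_{\{Y_i(t)\ne X_i(t)\}}$ by the number of its upcrossings yields $\E[\mathbf{1}_G\sup_{t\le T}|Y_i-X_i|]\le2\,\E[\mathbf{1}_G\int_0^T|\alpha^i_{\bm Y}(s)-\alpha^i_{\bm X}(s)|\,ds]$. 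Using $|[a]^\pm-[b]^\pm|\le|a-b|$, the identities $\Delta^iU^*(t,x_i,\mu)=\pm Z(T-t,2\mu-1)$, Theorem~\ref{value}, the Lipschitz bound for $U^*$ and $|\mu^{N,i}_{\bm Y}(s)-\mu^{N,i}_{\bm X}(s)|\le\frac1N\sum_{j\ne i}\mathbf{1}_{\{Y_j(s)\ne X_j(s)\}}$ when $Y_i(s^-)=X_i(s^-)$, together with the crude bound $|\alpha^i_{\bm Y}-\alpha^i_{\bm X}|\le4$ (valid since $|Z^N|,|Z|\le2$) when $Y_i(s^-)\ne X_i(s^-)$, one gets on $G$
\[
|\alpha^i_{\bm Y}(s)-\alpha^i_{\bm X}(s)|\le\frac{C_{\varepsilon_0}}{N}+\frac{L_{\varepsilon_0}}{2N}\sum_{j\ne i}|Y_j(s^-)-X_j(s^-)|+4\,\mathbf{1}_{\{Y_i(s^-)\ne X_i(s^-)\}}.
\]

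Setting $\rho(t):=\E[\mathbf{1}_G\sup_{s\le t}|Y_i(s)-X_i(s)|]$, which is $i$-independent by exchangeability, and using $\E[\mathbf{1}_G\mathbf{1}_{\{Y_i(s)\ne X_i(s)\}}]=\tfrac12\E[\mathbf{1}_G|Y_i(s)-X_i(s)|]\le\tfrac12\rho(s)$ and $\E[\mathbf{1}_G|Y_j(s)-X_j(s)|]\le\rho(s)$, the two displays above give
\[
\rho(t)\le\frac{2C_{\varepsilon_0}T}{N}+\Big(\frac{L_{\varepsilon_0}}{2}+4\Big)\int_0^t\rho(s)\,ds,
\]
and Grönwall's lemma yields $\rho(T)\le C_{\mu_0}/N$; combined with the $G^c$ estimate this proves \eqref{17}, with $C_{\mu_0}\to\infty$ as $\mu_0\to\frac12$ since $\varepsilon_0\to0$ and $L_{\varepsilon_0}\to\infty$. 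The main obstacle is the confinement step: it is precisely Theorem~\ref{charac} and its entropy-solution analogue that prevent the trajectories from reaching $\mu=\frac12$, where $U^*$ is discontinuous and the Lipschitz estimates above are unavailable; the rest is a routine coupling/Grönwall computation.
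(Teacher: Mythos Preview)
Your proof is correct and follows essentially the same route as the paper's: confine both $\bm Y$ and $\bm X$ to $\Sigma_N^{\varepsilon}$ using Theorem~\ref{charac} and the sign of the entropy solution $Z$, then on that event compare the two feedbacks via Theorem~\ref{value} and the Lipschitz bound on $U^*$ away from $\mu=\tfrac12$, use exchangeability to close the estimate, and conclude by Gr\"onwall. The only cosmetic differences are that you invoke Hoeffding (giving $e^{-cN}$) where the paper uses Chebyshev (giving $C/(N\varepsilon)$) for $\P(\bm\xi\notin\Sigma_N^{\varepsilon})$, and you organize the coupling estimate via an upcrossing count and a case split on $\{Y_i(s^-)=X_i(s^-)\}$ whereas the paper writes the Poisson--SDE bound directly and inserts $u^{N,i}(s,\bm Y_s)$ as an intermediate term; both lead to the same Gr\"onwall inequality.
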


\begin{proof}
Let $\mu_0= \frac12 +2 \varepsilon$ and consider the {\r event} $A$ where both $\bm{X}_t$ and $\bm{Y}_t$ belong to $\Sigma_N^\varepsilon$, for any time. Exploting the probabilistic representation of the dynamics in terms of Poisson random measures (see \cite{cf}), we have
\begin{align*}
\E &\left[\sup_{s\in[0,t]}|X_i(s) - Y_i(s)|\right]  \\
&\leq C\mathbb{E} \! \left[\int_{0}^t \left[\left|a^* (X_{i,s}, \Delta^i u^{N,i}(s,\bm{X}_s)) \! -\! a^*(Y_{i,s}, \Delta^i v^{N,i}(s,\bm{Y}_s))\right|+ \left|X_{i,s} \!-\! Y_{i,s}\right|\right]ds\right]\nonumber\\
&\leq C \E \left[\int_0^t \left[|X_i(s)-Y_i(s)| + |\Delta^i u^{N,i}(s,\bm{X}_s) - \Delta^i v^{N,i}(s,\bm{Y}_s)|\right] ds\right]\\
&\leq C \E \left[\int_0^t |X_i(s)-Y_i(s)|ds\right] + 
C \E\left[\mathbbm{1}_A \int_0^t |\Delta^i u^{N,i}(s,\bm{Y}_s) - \Delta^i v^{N,i}(s,\bm{Y}_s)|ds\right]\\
&+ C \E\left[\mathbbm{1}_A \int_0^t |\Delta^i u^{N,i}(s,\bm{X}_s) - \Delta^i u^{N,i}(s,\bm{Y}_s)|ds\right] + C P(A^c).
\end{align*}
and now  we apply \eqref{14} together with \eqref{13}, the Lipschitz continuity of $U$ in $\Sigma_N^\varepsilon$ and the exchangeability of the processes to get,
if $N\geq \frac 2\varepsilon$,
\begin{align}
&\E \left[\sup_{s\in[0,t]}|X_i(s) - Y_i(s)|\right] \leq \frac{C}{N} +C \int_0^t \E|X_i(s)-Y_i(s)|ds + 
P(A^c) \nonumber\\
& +  C \E\Bigg[\mathbbm{1}_A \int_0^t \left[|U(s, X_i(s), \mu^{N,i}_{\bm{X}}(s)) - U (s,X_i(s), \mu^{N,i}_{\bm{Y}}(s))| \right.\nonumber\\
&\left.\hspace{2 cm} + |U(s, -X_i(s), \mu^{N,i}_{\bm{X}}(s)) - U (s,-X_i(s), \mu^{N,i}_{\bm{Y}}(s))|\right]ds\Bigg] \nonumber\\
&\leq \frac{C}{N} +C \int_0^t \E|X_i(s)-Y_i(s)|ds + 
P(A^c) +  C \E \left[ \mathbbm{1}_A \int_0^t \frac 1N \sum_{j\neq i} |X_j(s)-Y_j(s)|ds\right]\nonumber\\
&\leq  \frac{C}{N} + C \int_0^t \E|X_i(s)-Y_i(s)|ds + P(A^c). \label{15}
\end{align}

We can bound the probability of $A^c$ by considering the process in which the  transition rates are equal to 0, for any time, i.e. the constant process equal to the initial condition $\bm{\xi}$. Thanks to the shape of the Nash equilibrium,  which prevents the dynamics from crossing the discontinuity, and of the control induced by the solution to the Master equation, we have 
\be
P(A^c)= P(\exists t : \mbox{ either } \bm{X}_t \mbox{ or } \bm{Y}_t\notin \Sigma_N^\varepsilon) \leq 
2 P(\bm{\xi}\notin \Sigma_N^\varepsilon). 
\ee
For the latter, we have 
\begin{align*}
P(\bm{\xi} \notin \Sigma_N^\varepsilon)& = P\left(\sum_{i=0}^N \xi_i \in \left(\frac{N}{2} - N\varepsilon, \frac{N}{2} + N\varepsilon+1 \right)\right)\nonumber\\
& \leq P\left(\sum_{i=0}^N \xi_i \leq \frac{N}{2} + N\varepsilon+1\right)
\leq P\left(\mu^N_{\bm{\xi}}\leq \frac12+ \varepsilon_N\right),
\end{align*}
denoting $\varepsilon_N := \frac{\frac{N}{2} + N\varepsilon + 1}{N+1} - \frac{1}{2}$. Observing that $(N+1)\mu^N_{\bm{\xi}} \sim \mathrm{Bin} (N+1, \frac12 + 2\varepsilon)$ {\r (recall $\mu_0 = \frac{1}{2} +2 \varepsilon$)}, we can further estimate, by standard Markov inequality,
\begin{align}
P(\bm{\xi} \notin \Sigma_N^\varepsilon)& \leq P\left(\left|\mu^N_{\bm{\xi}} - \frac 12 - 2\varepsilon\right| \geq 2\varepsilon - \varepsilon_N\right) 
\leq \frac{\mathrm{Var}\left[\mu^N_{\bm{\xi}}\right]}{(2\varepsilon - \varepsilon_N)^2} \nonumber\\
&= \frac{1}{N+1} \frac{\left(\frac12 +2\varepsilon\right) \left(\frac12 -2\varepsilon\right)}{\left(2\varepsilon - \frac{N}{N+1}\left(\frac12 +\varepsilon\right) - \frac{1}{N+1}+\frac12\right)^2}
\leq \frac {C}{N \varepsilon} \label{16}
\end{align}
if $N\geq \frac 2\varepsilon$, so that $2\varepsilon - \varepsilon_N \geq \frac \varepsilon4$.

Putting estimate \eqref{16} into \eqref{15}, and denoting $\varphi(t):= \E \left[\sup_{s\in[0,t]}|X_i(s) - Y_i(s)|\right]$, 
we obtain
\be
\varphi(t) \leq \frac{C}{N\varepsilon} + C \int_0^t \varphi(s)ds
\ee 
which, by Gronwall's lemma, gives \eqref{17}, but only if $N\geq \frac 2\varepsilon$. By changing the value of $C=C_\varepsilon$, the claim follows for any $N$.
\end{proof}

We are now in the position to prove Theorem \ref{chaos}. Thanks to \eqref{17}, it is enough to show that
\be
\mathbb{E}\left[\sup_{t\in[0,T]}|X_i(t) - \widetilde{X}_i(t)|\right] \leq \frac{C_{\mu_0}}{\sqrt{N}},
\ee
Recall that the $\widetilde{X}_i$'s are i.i.d and  $\text{Law}(\widetilde{X}_i(t)) = m^*(t)$; also, set $m=m^*$ and $\mu=\frac{m+1}{2}$. {\r Moreover, we know that $(N+1)\mu^{N}_{\widetilde{\bm{X}}}(t) \sim \mathrm{Bin}(N+1, \mu(t))$.} The rate of convergence follows from the estimate 
\be
\E \left| \mu^{N}_{\widetilde{\bm{X}}}(t) - \mu(t) \right|\leq \frac{C}{\sqrt{N}},
\ee
for any time, {\r by Cauchy-Schwarz inequality}.

\begin{proof}[Proof of Theorem \ref{chaos}]

Let $\mu_0= \frac12 +2 \varepsilon$ and consider the {\r event} $A$ where both $\bm{X}_t$ and $\widetilde{\bm{X}}_t$ belong to $\Sigma_N^\varepsilon$, for any time. Arguing as in the proof of Theorem \ref{XY}, we obtain
\begin{align*}
\E &\left[\sup_{s\in[0,t]}|X_i(s) - \widetilde{X}_i(s)|\right] \leq C \int_0^t \E|X_i(s)-\widetilde{X}_i(s)|ds + 
P(A^c) \\
& \hspace{2 cm} +  C \E\Bigg[\mathbbm{1}_A \int_0^t |U(s, X_i(s), \mu^{N,i}_{\bm{X}}(s)) - U(s,X_i(s), \mu^{N,i}_{\bm{\widetilde{X}}}(s))| \\
&  \hspace{3.5 cm} + |U(s, -X_i(s),  \mu^{N,i}_{\bm{\widetilde{X}}}(s)) - U (s,-X_i(s),\mu(s))|ds\Bigg] \\
&\hspace{ 2cm }\leq C \int_0^t \E|X_i(s)-\widetilde{X}_i(s)|ds + P(A^c)  \\
& \hspace{3.5 cm}  +  C \E \left[ \mathbbm{1}_A \int_0^t \frac 1N \sum_{j\neq i} |X_j(s)-\widetilde{X}_j(s)|ds\right] + C\sup_{t\in [0,T]} \E\left| \mu^{N}_{\widetilde{\bm{X}}}(t) - \mu(t) \right|\\
& \hspace{2 cm} \leq  \frac{C}{\sqrt{N}} + C \int_0^t \E|X_i(s)-\widetilde{X}_i(s)|ds + P(A^c). 
\end{align*}
We can bound the probability of $A^c$ as before and thus Gronwall's Lemma allows to conclude.
\end{proof}

\subsection{Potential mean field game} \label{SubPotential}

 We give here another characterization of the solutions to the MFG system \eqref{mfg}. For a more detailed introduction on potential mean field games in the finite state space see \cite{tesi_alekos}, Section 1.4.1. We show that system \eqref{mfg} can be viewed as the {\r necessary conditions for optimality}, given by the Pontryagin maximum  principle, of a \emph{deterministic} optimal control problem in $\mathbb{R}^2$. We show that the $N$-player game,
in the limit as $N \rightarrow +\infty$,
 selects exactly the {\r global minimizer} of this problem when it is unique, i.e. when $m_0\neq0$. 

The notation {\r is} slightly different in this section. Consider the controlled dynamics, representing the KFP equation,
\be
\begin{cases}
\dot{m}_1 = m_{-1} \alpha_{-1} -m_1\alpha_1\\
\dot{m}_{-1} = m_1 \alpha_1 -m_{-1}\alpha_{-1}\\
m(0)=m_0.
\end{cases}
\label{kol}
\ee
The state variable is $m(t)=(m_1(t),m_{-1}(t))$. Note that, in the previous notation, we had $m_1=\mu$ and $m=m_1-m_{-1}$. Here the control is $\alpha(t) =(\alpha_1(t),\alpha_{-1}(t))$, deterministic and open-loop, taking  values in
$$A=\left\{(a_1,a_{-1}) : a_1,a_{-1} \geq 0\right\}.$$
Clearly, if $m_0 = (m_{0,1}, m_{0,-1})$ belongs to the simplex
$$P(\left\{1,-1\right\}):= \left\{ (m_1,m_{-1}) : m_1+m_{-1} =1, m_1,m_{-1} \geq0\right\},$$
then, for any choice of the control $\alpha$, the dynamics remains in $P(\left\{1,-1\right\})$ for any time.

The cost to be minimized is 
\be
\mathcal{J}(\alpha)= \int_0^T \left(m_1(t) \frac{\alpha_1(t)^2}{2} + m_{-1}(t)\frac{\alpha_{-1}(t)^2}{2}\right)dt + \mathcal{G}(m(T)),
\label{costo}
\ee
where $\mathcal{G}(m_1, m_{-1}):= - \frac{(m_1-m_{-1})^2}{2}$ is such that
\begin{align*} 
\frac{\partial}{\partial m_1} \mathcal{G}(m)&= -(m_1-m_{-1})=: G(1,m) \\ 
\frac{\partial}{\partial m_{-1}} \mathcal{G}(m)&= m_1-m_{-1}=: G(-1,m),
\end{align*}
whereas $G(x,m)= -x (m_1-m_{-1})$, for $x=\pm 1$, is the terminal cost. This structure is called \emph{potential} Mean Field Game, since we have $\nabla \mathcal{G}(m) = G(\cdot,m)$. 

The Hamiltonian of this problem is 
\begin{align*}
\mathcal{H}(m,u)&= \sup_{a\in A} \left\{-b(m,a)\cdot u - m_1 \frac{a_1^2}{2} - m_{-1}\frac{a_{-1}^2}{2}\right\}\\ 
&=m_1 \frac{[(u_{-1}-u_1)^-]^2}{2} + m_{-1}\frac{[(u_{1}-u_{-1})^-]^2}{2},
\end{align*}
where $b_{x}(m,a) = m_{-x}a_{-x} - m_x a_x$, for $x=\pm 1$, is the vector field in \eqref{kol}, 
and the argmax of the Hamiltonian is 
\begin{align*}
a^*_1(u)&= (u_{-1}-u_1)^- ,\\
a^*_{-1}(u)&= (u_{1}-u_{-1})^-.
\end{align*}
Thus, the HJB equation of the control problem reads 
\be
\begin{cases}
&-\frac{\partial \mathcal{U}}{\partial t} + \mathcal{H}(m, \nabla_m \mathcal{U})=0 \qquad t\in [0,T), m\in\mathcal{P}(\left\{1,-1\right\})\\
&\mathcal{U}(T,m)=\mathcal{G}(m),
\end{cases}
\label{value2}
\ee
and its characteristics curves are given by the MFG system
\be
\begin{cases}
-\dot{u}_1 + \frac{[(u_{-1}-u_1)^-]^2}{2}=0\\
-\dot{u}_{-1} + \frac{[(u_{1}-u_{-1})^-]^2}{2}=0\\
\dot{m}_1 = m_{-1} a_{-1}^*(u) -m_1 a_1^*(u)\\
\dot{m}_{-1} = m_1 a_1^*(u) -m_{-1}a_{-1}^*(u)\\
u_{\pm 1}(T)=G(\pm 1,m(T)), \quad m(0)=m_0.
\end{cases}
\label{pon}
\ee

{\r \begin{lem}
\begin{enumerate}
	\item There exists an optimum of the control problem \eqref{kol}-\eqref{costo};
	\item The MFG system \eqref{pon} represents the necessary conditions for optimality, given by the Pontryagin maximum principle.
\end{enumerate}
\end{lem}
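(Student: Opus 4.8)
The plan is to treat the two assertions in turn. For (1) I would argue by the direct method of the calculus of variations, after reducing to a one‑dimensional problem. Since, as already observed, the simplex $P(\{1,-1\})$ is forward‑invariant under \eqref{kol} for every admissible $\alpha$, the state lies on the segment $m_{-1}=1-m_1$ and no state constraint is active; writing $q:=m_1\in[0,1]$ and minimising, for each prescribed velocity $v=\dot q$, the instantaneous cost $m_1\alpha_1^2/2+m_{-1}\alpha_{-1}^2/2$ over the controls producing it (one flips only in the useful direction), one is led to minimising
\[\int_0^T \ell(q(t),\dot q(t))\,dt+\mathcal{G}\big(q(T),1-q(T)\big)\]
over absolutely continuous $q:[0,T]\to[0,1]$ with $q(0)=m_{0,1}$, where $\ell(q,v)=v^2/(2(1-q))$ for $v\ge 0$ and $\ell(q,v)=v^2/(2q)$ for $v<0$. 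Because $0\le q\le1$ one has $\ell(q,v)\ge v^2/2$, $\ell$ is convex and superlinear in $v$ and jointly lower semicontinuous, while $\mathcal{G}\ge-1/2$ on the simplex and is continuous. Hence along a minimising sequence $q_n$ the energies $\int_0^T(\dot q_n)^2$ are bounded, so the $q_n$ are uniformly Hölder-$\tfrac12$ and, by Arzelà--Ascoli, converge uniformly (along a subsequence) to some $q^*\in C([0,T];[0,1])$ with $\dot q_n\rightharpoonup\dot q^*$ in $L^2$; Tonelli's semicontinuity theorem then gives $\int\ell(q^*,\dot q^*)\le\liminf\int\ell(q_n,\dot q_n)$, and continuity of $\mathcal{G}$ yields optimality of $q^*$. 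An optimal control for \eqref{kol}--\eqref{costo} is then recovered by the pointwise selection used in the reduction ($\alpha^*_1=(\dot q^*)^-/q^*$, $\alpha^*_{-1}=(\dot q^*)^+/(1-q^*)$ where the denominators are positive, $0$ otherwise), which is measurable, $A$‑valued and realises the same cost. (Alternatively one may work directly in $\mathbb{R}^2$: along a minimising sequence the fluxes $m_1^n\alpha_1^n$ and $m_{-1}^n\alpha_{-1}^n$ are equi‑integrable by Cauchy--Schwarz, so $m^n$ is uniformly Hölder-$\tfrac12$, and lower semicontinuity of the convex integral functional passes to the limit.)

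For (2), the terminal state being free, the problem is normal, and I would apply the Pontryagin maximum principle in Bolza form to \eqref{kol}--\eqref{costo}, with running cost $f(m,a)=m_1a_1^2/2+m_{-1}a_{-1}^2/2$, drift $b_x(m,a)=m_{-x}a_{-x}-m_xa_x$ and terminal cost $\mathcal{G}$. It produces an absolutely continuous costate $u=(u_1,u_{-1})$ such that the optimal control maximises $-b(m^*,a)\cdot u-f(m^*,a)$ over $a\in A$; since $f$ is separable and strictly convex in $a$ and $b$ is affine in $a$, the maximiser is unique and equals $a^*(u)$ with $a^*_1(u)=(u_{-1}-u_1)^-$, $a^*_{-1}(u)=(u_1-u_{-1})^-$, so $\dot m^*=b(m^*,a^*(u))$ is the state equation of \eqref{pon}. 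Substituting $a^*(u)$ into $\mathcal{H}$ gives $\partial_{m_1}\mathcal{H}=\tfrac12[(u_{-1}-u_1)^-]^2$ and $\partial_{m_{-1}}\mathcal{H}=\tfrac12[(u_1-u_{-1})^-]^2$, and the adjoint equation --- after using the envelope identity $\partial_u\mathcal{H}(m,u)=-b(m,a^*(u))$ --- takes the form $\dot u=\partial_m\mathcal{H}(m^*,u)$, i.e. the costate equations of \eqref{pon}; this is exactly the sign that makes $(m^*,u)$ a characteristic curve of the HJB equation \eqref{value2}, as one verifies by differentiating $-\partial_t\mathcal{U}+\mathcal{H}(m,\nabla_m\mathcal{U})=0$ in $m$ along $m^*$. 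Finally the transversality condition reads $u(T)=\nabla\mathcal{G}(m^*(T))=G(\cdot,m^*(T))$ (here the potential structure $\nabla\mathcal{G}=G(\cdot,\cdot)$ is used), and together with $m(0)=m_0$ this closes system \eqref{pon}.

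The step I expect to be the main obstacle is handling the unbounded control set $A=[0,\infty)^2$ together with the degeneracy of the running cost on the boundary of the simplex (where the coefficients $m_x$ vanish), since the textbook forms of both the direct method and the Pontryagin principle assume a compact control set. The one‑dimensional reduction above is meant precisely to neutralise this: the reduced Lagrangian $\ell$ is genuinely superlinear and coercive ($\ell(q,v)\ge v^2/2$), so existence is a standard Tonelli argument, and the necessary conditions follow either from the du Bois--Reymond form of the Euler--Lagrange equation with natural boundary condition at $t=T$ (valid since $\ell$ is $C^1$ on $(0,1)\times\mathbb{R}$ and the characteristics cannot exit the simplex) or from Clarke's nonsmooth maximum principle; translating back through the Legendre pair $(\ell,\mathcal{H})$ returns \eqref{pon}. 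As a safe fallback for (2) one may instead truncate the control set to $[0,R]^2$, apply the classical Pontryagin principle there, and observe that, since $u$ solves an ODE with continuous right‑hand side on $[0,T]$ and is therefore bounded, the maximiser $a^*(u)$ is interior for $R$ large and hence independent of the truncation, so $(m^*,u)$ solves the original necessary conditions.
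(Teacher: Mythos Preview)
Your argument is correct. The paper itself proves this lemma in two lines: for (1) it cites Theorem~5.2.1 in Bressan--Piccoli \cite{brepi}, observing that the dynamics is affine in $\alpha$ and the running cost convex in $\alpha$, and for (2) it simply declares the conclusion ``standard''. Your route is genuinely different and more self-contained. For existence you reduce to a one-dimensional Tonelli problem on $q=m_1$, obtain the coercive bound $\ell(q,v)\ge v^2/2$, and run Arzel\`a--Ascoli plus weak lower semicontinuity; this avoids invoking an abstract existence theorem and makes transparent why the unbounded control set and the degeneracy of the running cost near $\partial P(\{1,-1\})$ do not obstruct. For (2) you actually carry out the Pontryagin computation, checking that the adjoint system and transversality condition match \eqref{pon} and explaining why the potential identity $\nabla\mathcal G=G(\cdot,\cdot)$ is precisely what closes the system. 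The trade-off is length versus transparency: the paper's citation is terse but hides the verification that the hypotheses of \cite{brepi} (in particular the treatment of the noncompact control set) are met, whereas your direct method makes this explicit. Your remark that the envelope identity in $u$ is what links $\dot m$ to $-\partial_u\mathcal H$ is correct but slightly tangential to the adjoint equation; the relevant envelope identity there is the one in $m$, which you use implicitly when writing $\dot u=\partial_m\mathcal H(m^*,u)$.
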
}

\begin{proof}
The first claim follows from Theorem 5.2.1 p. 94 in \cite{brepi}, which can be applied since the dynamics is linear in $\alpha$ and the running cost is convex in $\alpha$.
Conclusion (2) is standard.
\end{proof}

We know that, if $T$ is large enough, there are three solutions to the MFG system. The control problem \eqref{kol}-\eqref{costo} has a minimum, so we wonder which of these solutions is indeed a minimizer.

First, we need to investigate some property of the roots of \eqref{cons}. Let  $T>T(m_0)$ be fixed. Let $M_1(m_0)<M_2(m_0)<M_3(m_0)$ be the three solutions to \eqref{cons}. If $m_0=0$ denote $M_-=M_1(0)<0$, $M_+=M_3(0)>0$; we have $M_2(0)=0$ and $M_+=M_-$. If $m_0>0$ then, by Proposition \ref{prop1}, $M_3(m_0)>0$ and $M_1(m_0),M_2(m_0)<0$; if $m_0<0$ then $M_3(m_0)<0$ and $M_1(m_0),M_2(m_0)>0$.

\begin{lem}
Let $m_0>0$ and $T>T(m_0)$ be fixed. Then
\begin{enumerate}
	\item The function $[0,m_0]\ni m\mapsto M_3(m) \in [0,1]$ is increasing, $M_2(m)$ is decreasing and $M_1(m)$ is increasing. In particular
	for any $m\in [0,m_0]$
	\be
	M_3(m)>M_+=|M_-|>|M_1(m)|>|M_2(m)|> M_2(0)=0
	\label{cat}
	\ee
	\item We have $M_1(m)<-\frac{2T-1}{3T}<M_2(m)<0$ and for any $m\in [0,m_0]$
	\be
    \left| M_2(m) +\frac{2T-1}{3T}\right| > \left| M_1(m) +\frac{2T-1}{3T}\right|.
	\label{maggio}
	\ee
\end{enumerate}
The case $m_0<0$ is symmetric. 
\label{lemmon}
\end{lem}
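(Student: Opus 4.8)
The plan is to track the three roots of $g(M,T,m)=0$ as functions of $m\in[0,m_0]$ by an implicit function theorem argument, after pinning down their location via an elementary discussion of the two branch cubics obtained from $M|M|=\pm M^2$. Set $f(M):=T^2M^3+T(2-T)M^2+(1-2T)M-m$ (which coincides with $g(M,T,m)$ for $M\ge 0$) and $\tilde f(M):=T^2M^3-T(2-T)M^2+(1-2T)M-m$ (which coincides with $g(M,T,m)$ for $M\le 0$). The discriminants of $f'$ and of $\tilde f'$ are both the perfect square $(T+1)^2$, so $f$ has a minimum on $[0,\infty)$ at $M^*:=\tfrac{2T-1}{3T}$, while $\tilde f$ has a local maximum at $M^{**}:=-\tfrac{2T-1}{3T}$ and a local minimum at $\tfrac1T$. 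A direct computation gives $f(M^*)=-m-\tfrac{(2T-1)^2(T+4)}{27T}<0$, $\tilde f(M^{**})=\tfrac{(2T-1)^2(T+4)}{27T}-m$, and $\tilde f(\tfrac1T)=\tilde f(-1)=-1-m<0$. Since $\tau\mapsto\tfrac{(2\tau-1)^2(\tau+4)}{27\tau}$ is strictly increasing on $[\tfrac12,\infty)$ and $T>T(m_0)$, we obtain $\tilde f(M^{**})\ge\tfrac{(2T-1)^2(T+4)}{27T}-m_0>0$ for every $m\in[0,m_0]$. Hence for each such $m$ the cubic $g(\cdot,T,m)$ has exactly three simple roots, placed as
\[
-1<M_1(m)<M^{**}<M_2(m)\le 0<M^*<M_3(m),
\]
with $M_2(m)<0$ strictly once $m>0$; this is precisely the two-sided bound claimed in part (2).

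For the monotonicity in part (1), differentiate $g(M_i(m),T,m)=0$ in $m$: since $\partial_m g\equiv-1$ we get $M_i'(m)=1/\partial_M g(M_i(m))$. Here $\partial_M g$ equals $f'$ at $M_3(m)$, which is positive because $M_3(m)>M^*$; it equals $\tilde f'$ at $M_1(m)$, which is positive because $M_1(m)<M^{**}$; and it equals $\tilde f'$ at $M_2(m)$, which is negative because $M^{**}<M_2(m)<\tfrac1T$. Thus $M_3$ and $M_1$ are increasing and $M_2$ is decreasing on $[0,m_0]$. The chain \eqref{cat} then follows by evaluating at $m=0$, where $M_1(0)=M_-=-M_+$, $M_2(0)=0$, $M_3(0)=M_+$ (the sign symmetry coming from $g(-M,t,-m)=-g(M,t,m)$), together with the trivial ordering $M_1<M_2<0$: for $m\in(0,m_0]$ one has $M_3(m)>M_3(0)=M_+$, $|M_1(m)|=-M_1(m)<-M_1(0)=M_+$, $|M_1(m)|=-M_1(m)>-M_2(m)=|M_2(m)|$, and $|M_2(m)|=-M_2(m)>-M_2(0)=0$ (the interior inequalities degenerating to equalities at $m=0$).

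The core of the argument is inequality \eqref{maggio}, i.e.\ that $M_2$ lies farther to the right of $M^{**}$ than $M_1$ lies to its left. Write $M_1=M^{**}-s_1$ and $M_2=M^{**}+s_2$ with $s_1,s_2>0$; the claim is $s_2>s_1$. The key identity is that, because $\tilde f'(M^{**})=0$,
\[
\tilde f(M^{**}+s)-\tilde f(M^{**}-s)=2T^2s^3\qquad\text{for every }s,
\]
which is a one-line computation. Taking $s=s_1$ and using $\tilde f(M^{**}-s_1)=\tilde f(M_1)=0$ yields $\tilde f(M^{**}+s_1)=2T^2s_1^3>0=\tilde f(M^{**}+s_2)$. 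Moreover $M^{**}+s_1=2M^{**}-M_1<2M^{**}+\tfrac{1+4T}{3T}=\tfrac1T$, where we used $M_1>-1>-\tfrac{1+4T}{3T}$ (the last step since $\tfrac{1+4T}{3T}>\tfrac43>1$), so both $M^{**}+s_1$ and $M^{**}+s_2=M_2\le 0$ lie in the interval $(M^{**},\tfrac1T)$, on which $\tilde f$ is strictly decreasing. Comparing the two values forces $M^{**}+s_1<M^{**}+s_2$, i.e.\ $s_1<s_2$, which is \eqref{maggio}. Finally, the case $m_0<0$ is deduced from the $m_0>0$ case via the sign symmetry $g(-M,t,-m)=-g(M,t,m)$, which maps the roots at $-m_0$ onto the negatives of the roots at $m_0$.

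The main obstacle I anticipate is the bookkeeping that makes the implicit function argument global on all of $[0,m_0]$ rather than merely local: one must verify that throughout this interval the three roots stay, respectively, left of $M^{**}$, inside $(M^{**},\tfrac1T)$, and right of $M^*$, which is exactly where the hypothesis $T>T(m_0)$ and the monotonicity of $\tau\mapsto\tfrac{(2\tau-1)^2(\tau+4)}{27\tau}$ are really used; one should also keep in mind that several of the strict inequalities in \eqref{cat} collapse to equalities at the endpoint $m=0$. By contrast, once the decreasing region of $\tilde f$ is identified, the reflection identity reduces \eqref{maggio} to a one-line comparison, so that part should be routine.
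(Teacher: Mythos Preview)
Your proof is correct and follows essentially the same route as the paper. For part (1) the paper simply writes ``derives from the proof of Proposition~\ref{prop1}'', whereas you spell out the implicit--function argument $M_i'(m)=1/\partial_M g(M_i(m))$ and check signs via the location of each root relative to the critical points; this is the same idea made explicit. For part (2) the paper uses exactly your reflection identity $\tilde f(q+\varepsilon)-\tilde f(q-\varepsilon)=2T^2\varepsilon^3$ (obtained there by a finite Taylor expansion around the critical point $q=M^{**}$) and then says ``which implies \eqref{maggio}'' without further comment; your version is slightly more careful in that you verify $M^{**}+s_1<\tfrac1T$ so that both reflected points lie in the strictly decreasing region of $\tilde f$, which is what makes the comparison go through. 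Your caveat that several strict inequalities in \eqref{cat} collapse to equalities at $m=0$ is accurate and worth keeping.
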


\begin{proof}
Claim (1) derives from the proof of Proposition \ref{prop1}. For claim (2), $M_1(m)$ and $M_2(m)$ are the two negative roots of
$f(M)= T^2 M^3 - T(2-T)M^2 + (1-2T)M - m =0.$
The roots of  $f'(M)$ are $q:=-\frac{2T-1}{3T}$ and $\frac 1T$. Hence $M_1<q<M_2<0$, $f(q)>0$ and we have, by Taylor's formula (which here is actually a change of variable),
\begin{align*}
f(q+\varepsilon) &=f(q)+f'(q)\varepsilon + \frac{f''(q)}{2}\varepsilon^2 + \frac{f'''(q)}{6}\varepsilon^3 =
f(q)+ \frac{f''(q)}{2}\varepsilon^2 +T^2\varepsilon^3\\
f(q-\varepsilon) &=f(q)-f'(q)\varepsilon + \frac{f''(q)}{2}\varepsilon^2 - \frac{f'''(q)}{6}\varepsilon^3 =
f(q)+ \frac{f''(q)}{2}\varepsilon^2 -T^2\varepsilon^3
\end{align*}
for any $\varepsilon>0$. Thus $f(q+\varepsilon)-f(q-\varepsilon)= 2T^2\varepsilon^3>0$ for any $\varepsilon>0$, which implies \eqref{maggio}.
\end{proof}

For $i=1,2,3$, denote by $m_i, z_i, \alpha_i,m_i, u_i$ the solution to the MFG system corresponding to $M_i$.

\begin{thm}
\label{selects}
Let $m_0>0$ and $T>T(m_0)$ be fixed. Then for any $m\in[0,m_0]$ and $i=1,2,3$ we have  $\mathcal{J}(\alpha_i)=\varphi(M_i(m))$, where
$\varphi:[-1,1]\rightarrow [-1,1]$, 
\be
\varphi(M) := M^2 \left(T-\frac12 -T|M|\right).
\label{phi}
\ee
Moreover, for any $m \in \ (0,m_0]$,
\begin{align}
\varphi(M_+)&=\varphi(M_-)<\varphi(0)=0,\label{dis1}\\
\varphi(M_3(m))&<\varphi(M_+)<\varphi(M_1(m)),\label{dis2}\\
\varphi(M_1(m))&<\varphi(M_2(m))>0,\label{dis3}
\end{align}
meaning that $\alpha_+$ and $\alpha_-$ are both optimal if $m=0$ and $\alpha\equiv 0$ is not, while $\alpha_3$ is the unique {\r minimizer} 
if $m>0$, with 
\be
 \mathcal{J}(\alpha_3)< \mathcal{J}(\alpha_1)< \mathcal{J}(\alpha_2).
\ee
\end{thm}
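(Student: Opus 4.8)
The plan is to evaluate $\mathcal{J}(\alpha_i)$ in closed form as a function of the terminal value $M_i(m)$ of the $i$-th MFG solution, and then reduce the comparison of these values to elementary analysis of $\varphi$ together with the information on the roots of \eqref{cons} collected in Lemma~\ref{lemmon}. \emph{Step 1 (closed form for the cost).} Fix the initial mean $m\in[0,m_0]$ and put $M:=M_i(m)$. By Proposition~\ref{prop1} the corresponding MFG solution is explicit: $z(t)=2M/(|M|(T-t)+1)$ by \eqref{solhjb}, so $z$ keeps the sign of $M$, and the optimal open-loop control of \eqref{pon} has a single nonzero component ($a^*_{-1}\equiv z$ if $M>0$, $a^*_1\equiv -z$ if $M<0$, the trivial control if $M=0$). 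Along this trajectory \eqref{solkol} shows that $m_{-1}(t)$ (resp.\ $m_1(t)$) is proportional to $(|M|(T-t)+1)^2$ while $z(t)^2$ is proportional to $(|M|(T-t)+1)^{-2}$, so the Lagrangian $m_{-1}(t)z(t)^2/2$ (resp.\ $m_1(t)z(t)^2/2$) appearing in \eqref{costo} is \emph{constant in} $t$. Integrating over $[0,T]$ and using the consistency relation built into the MFG solution (the instance of \eqref{solkol} at $t=T$, equivalently \eqref{cons}) to eliminate $m$ in favour of $M$, the running cost collapses to $T M^2(1-|M|)$; adding the terminal cost $\mathcal{G}(m(T))=-M^2/2$ gives $\mathcal{J}(\alpha_i)=M^2(T-\tfrac12-T|M|)=\varphi(M_i(m))$, which is the first assertion.

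\emph{Step 2 (shape of $\varphi$, and \eqref{dis1}--\eqref{dis2}).} Since $\varphi(M)=\psi(|M|)$ with $\psi(s):=(T-\tfrac12)s^2-Ts^3$ and $\psi'(s)=s((2T-1)-3Ts)$, the function $\psi$ vanishes at $0$, is strictly positive exactly on $(0,\tfrac{2T-1}{2T})$, increases on $[0,\tfrac{2T-1}{3T}]$ and strictly decreases on $[\tfrac{2T-1}{3T},1]$. Solving the quadratic $T^2M^2+T(2-T)M+(1-2T)=0$ coming from \eqref{cons} with $m=0$ gives $M_+=(T-2+\sqrt{T(T+4)})/(2T)$, and $T>\tfrac12$ is precisely what makes $M_+>\tfrac{2T-1}{2T}$; hence $\varphi(M_+)=\psi(M_+)<0=\varphi(0)$, which together with $\varphi(M_-)=\psi(|M_-|)=\psi(M_+)=\varphi(M_+)$ is \eqref{dis1}. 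For $m\in(0,m_0]$, \eqref{cat} gives $0<|M_2(m)|<|M_1(m)|<M_+<M_3(m)$ while Lemma~\ref{lemmon}(2) gives $|M_2(m)|<\tfrac{2T-1}{3T}<|M_1(m)|$, and also $M_+>\tfrac{2T-1}{3T}$; since $\psi$ strictly decreases past $\tfrac{2T-1}{3T}$ this yields $\varphi(M_3(m))<\varphi(M_+)<\varphi(M_1(m))$, i.e.\ \eqref{dis2}, and $|M_2(m)|<\tfrac{2T-1}{3T}<\tfrac{2T-1}{2T}$ yields $\varphi(M_2(m))>0$.

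\emph{Step 3 (the delicate inequality, and conclusion).} It remains to show $\varphi(M_1(m))<\varphi(M_2(m))$ for $m\in(0,m_0]$. This is the one point where monotonicity is useless, since $|M_1(m)|$ lies above the maximizer $q:=\tfrac{2T-1}{3T}$ of $\psi$ and $|M_2(m)|$ below it. Write $|M_1(m)|=q+\varepsilon_1$ and $|M_2(m)|=q-\varepsilon_2$ with $0<\varepsilon_1<\varepsilon_2$, the inequality $\varepsilon_1<\varepsilon_2$ being exactly \eqref{maggio}. Taylor-expanding $\psi$ about $q$ (where $\psi'=0$, $\psi''=-(2T-1)$, $\psi'''\equiv-6T$) gives $\varphi(M_2(m))-\varphi(M_1(m))=-\tfrac{2T-1}{2}(\varepsilon_2^2-\varepsilon_1^2)+T(\varepsilon_2^3+\varepsilon_1^3)$; on the other hand, expanding about $-q$ the cubic whose two negative roots are $M_1(m),M_2(m)$ — exactly the computation in the proof of Lemma~\ref{lemmon}, where $f(-q+\varepsilon)-f(-q-\varepsilon)=2T^2\varepsilon^3$ — and subtracting the two root equations produces the identity $(T+1)(\varepsilon_2^2-\varepsilon_1^2)=T(\varepsilon_2^3+\varepsilon_1^3)$. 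Substituting, the two terms telescope to $\varphi(M_2(m))-\varphi(M_1(m))=\tfrac32(\varepsilon_2^2-\varepsilon_1^2)>0$, which is \eqref{dis3}. Finally, the preceding lemma guarantees that a minimizer of \eqref{kol}--\eqref{costo} exists, and by the Pontryagin principle (second part of that lemma, together with Proposition~\ref{prop1}, which says \eqref{pon} has exactly the three solutions indexed by $i$) every minimizer is one of $\alpha_1,\alpha_2,\alpha_3$; comparing the values $\varphi(M_i(m))$ then gives the stated selection — $\alpha_+$ and $\alpha_-$ are the two tied minimizers at $m=0$ and $\alpha\equiv0$ is not optimal, while for $m>0$ the ordering $\varphi(M_3)<\varphi(M_1)<\varphi(M_2)$ (from \eqref{dis2}--\eqref{dis3}) makes $\alpha_3$ the unique minimizer with $\mathcal{J}(\alpha_3)<\mathcal{J}(\alpha_1)<\mathcal{J}(\alpha_2)$.

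Everything except $\varphi(M_1)<\varphi(M_2)$ is essentially bookkeeping (the explicit solutions, the time-independence of the Lagrangian, the consistency relation, and the monotonicity of $\psi$). That one inequality is the crux, precisely because its two arguments straddle the maximizer of $\psi$; it is resolved only by exploiting the cubic structure of \eqref{cons} through the Taylor-expansion identity relating $\varepsilon_1$ and $\varepsilon_2$, where the cancellation fortuitously produces the clean $\tfrac32(\varepsilon_2^2-\varepsilon_1^2)$.
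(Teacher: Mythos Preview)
Your proof is correct and, for the routine parts (the cost computation, \eqref{dis1}, \eqref{dis2}, and $\varphi(M_2)>0$), essentially parallels the paper's argument, though with different tactical choices --- e.g.\ you show $\varphi(M_+)<0$ by explicitly solving for $M_+$ and checking $M_+>\tfrac{2T-1}{2T}$, whereas the paper substitutes the quadratic relation $T^2M_+^2+T(2-T)M_++(1-2T)=0$ directly into $\varphi(M_+)$ to simplify it to $\tfrac{T^2M_+^3}{2}(M_+-1)<0$.

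The genuinely different step is your treatment of \eqref{dis3}. The paper argues by contradiction and continuity: if $\varphi(M_1(m))=\varphi(M_2(m))=c$ for some $m$, then $M_1(m),M_2(m)$ are both roots of the degree-two polynomial $T\varphi(M)-Tc-f(M)$, whose vertex lies at $-q$, forcing $|M_1+q|=|M_2+q|$ and contradicting \eqref{maggio}; since the inequality holds at $m=0$ by \eqref{dis1} and can never be an equality, it holds on all of $(0,m_0]$. Your approach is instead a direct computation: Taylor-expanding $\psi$ about $q$ and $f$ about $-q$, then using the two root equations $f(M_1)=f(M_2)=0$ to extract the exact relation $(T+1)(\varepsilon_2^2-\varepsilon_1^2)=T(\varepsilon_2^3+\varepsilon_1^3)$, which substituted into the expansion of $\varphi(M_2)-\varphi(M_1)$ collapses to the explicit positive quantity $\tfrac32(\varepsilon_2^2-\varepsilon_1^2)$. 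Your route is more constructive and quantitative (it produces an exact formula for the gap), while the paper's is slicker and avoids the third-order Taylor bookkeeping; both hinge on the same asymmetry \eqref{maggio}, and both ultimately exploit the algebraic coincidence that $T\varphi-f$ drops to degree two.
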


\begin{proof}
The first claim and \eqref{phi} follow directly from \eqref{costo} and \eqref{solkol}. 

We  continue by proving \eqref{dis2}. The roots of $\varphi'$ are 0 and $\pm q$, with $q:= -\frac{2T-1}{3T}$. The function $\varphi$ is then increasing if either $M<q$ or $0<M<-q$.  Thus  \eqref{dis2} follows from \eqref{cat} and the fact that $\varphi(M_+)=\varphi(M_-)$, as 
$\varphi(M)$ only depends on  $|M|$. 

Next, we show that $\varphi(M_+)< 0=\varphi(0)$. Since $M_+$ solves $T^2 M^2 + T(2-T)M +1-2T=0$, we obtain, for $M=M_+$,
$$\varphi(M)= \frac{M^2}{2} (2T-1-2TM)= \frac{M^2}{2} (T^2 M^2 - T^2 M)= \frac{T^2 M^3}{2}(M-1)<0$$
because $M_+<1$. 

To prove \eqref{dis3}, we first note that we have just showed that it holds in $m=0$: 
$\varphi(M_1(0))=\varphi(M-)=\varphi(M_+)<0=\varphi(0)=\varphi(M_2(0))$. We also know that $\varphi(M_1(m))>\varphi(M_1(0))$ and 
$\varphi(M_2(m))>\varphi(M_2(0))$, thanks to the monotonicity behavior of $\varphi$ and Lemma \ref{lemmon}. Hence suppose by contradiction that there exists $m\in ]0,m_0]$ such that $\varphi(M_1(m))=\varphi(M_2(m)) =c$, for some $c>0$. This implies that both $M_1(m)$ and $M_2(m)$ are negative roots of $\varphi(M) -c$. Thus they are also negative roots of
$$\psi(M):= T\varphi(M)- Tc - f(M)= \frac32 T M^2 -(1-2T)M + m - Tc =0$$
and $\psi'(q)=0$, where $q= -\frac{2T-1}{3T}$ as above. Since $\psi$ has degree 2, it follows that $|M_2(m)-q|=|M_1(m)-q|$, but this contradicts \eqref{maggio}. Therefore there is no $m$ for which $\varphi(M_1(m))=\varphi(M_2(m))$, and then if \eqref{dis3} holds for $m=0$ (which is \eqref{dis1}) then it is true for any $m\in[0,m_0]$.
\end{proof}
 Note that 
the results in this section imply that the $N$-player game selects, in the limit as $N \ra +\infty$,
the global minimizer of the control problem \eqref{costo}, when it is unique. Moreover, the sequence of the $N$-player value functions $V^N$ converges to the derivative of the value function of such control problem, as the latter is constructed by using the same characteristic curves used for constructing the solution \eqref{zstar} to the master equation. We remark that the value function of the control problem \eqref{costo} can also be characterized as the unique viscosity solution to \eqref{value2}.

\section{Conclusions} \label{Conclusions}

Let us summarize the main results we have obtained for this two state model with anti-monotonous terminal cost:
\begin{enumerate}
	\item the mean field game possesses exactly 3 solutions, if $T>2$ (Proposition \ref{prop1});
	\item the $N$-player value functions converge to the entropy solution to the master equation (Theorem \ref{value});
	\item the $N$-player optimal trajectories converge to one mean field game solution, if $m_0\neq0$ (Theorem \ref{chaos});
	\item viewing the mean field game system as the necessary conditions for optimality of a deterministic control problem, the $N$-player game selects the {\r global minimizer} of this problem, when it is unique, i.e. $m_0\neq0$ (Theorem \ref{selects}).
\end{enumerate}

We remark that in the convergence proof we did not make use of the characterization of the right solution to the master equation as the entropy admissible one; the key point is to show that the $N$-player optimal trajectories do not cross the discontinuity. Neither did we use the potential structure of the problem: these are properties which might allow to  extend the convergence results to more general models.

Observe that solutions of the MFG system, whether selected by the limit of $N$-player Nash equilibria or not, always yield approximate Nash equilibria in decentralized symmetric feedback strategies; see, for instance, \cite{basna} and \cite{cf} in the finite state setting.

What is left to prove for this model is a propagation of chaos result when $m_0=0$. Let $m_+$, resp. $m_-$, be the mean field game solution always positive, resp.\ always negative. What is evident from the simulations is that the $N$-player optimal trajectories admit a limit which is not deterministic: it is supported {\r in} $m_+$ and $m_-$ with probability $1/2$. We also observe  that $m_+$ and $m_-$ are both {\r minimizers} of the deterministic optimal control problem related to the potential structure.  An analogous result is rigorously obtained in \cite{delaruetchuendom} in the diffusion setting, where the focus is on starting the dynamics at the discontinuity of the unique entropy solution to the master equation.


\end{document}